\numberwithin{equation}{section}
\newtheorem{theorem}{Theorem}[section]
\newtheorem{corollary}[theorem]{Corollary}
\newtheorem{lemma}[theorem]{Lemma}
\newtheorem{conjecture}[theorem]{Conjecture}
\newtheorem{example}[theorem]{Example}
\newtheorem{remark}[theorem]{Remark}
\theoremstyle{definition}
\newtheorem{defn}[theorem]{Definition}
\newcommand{\x}{\mathbf{x}}
\newcommand{\gr}{\mathrm{gr}}
\newcommand{\ZZ}{\mathbb{Z}}
\newcommand{\II}{\mathbf{I}}
\newcommand{\symm}{\mathfrak{S}}
\newcommand{\SYT}{\mathrm{SYT}}
\newcommand{\SSS}{\mathcal{S}}
\newcommand{\RR}{\mathcal{R}}
\newcommand{\topl}{\mathrm{Top}}
\newcommand{\nega}{\mathrm{neg}}
\newcommand{\pos}{\mathrm{pos}}
\newcommand{\lis}{\mathrm{lis}}
\newcommand{\Hilb}{{\mathrm {Hilb}}}
\newcommand{\CC}{\mathbb{C}}
\newcommand{\xx}{\mathbf{x}_{n \times n}}
\newcommand{\bl}{\boldsymbol{\lambda}}
\newcommand{\bm}{\boldsymbol{\mu}}
\newcommand{\bn}{\boldsymbol{\nu}}
\newcommand{\gl}{\mathrm{GL}}
\newcommand{\Res}{\mathrm{Res}}
\newcommand{\End}{\mathrm{End}}
\newcommand{\tl}{\tilde{\lambda}}
\newcommand{\tm}{\tilde{\mu}}
\newcommand{\sss}{\mathfrak{s}}
\begin{document}

\title{Viennot Shadows and Graded Module Structure in \\Colored Permutation Groups}

\author{Jasper M. Liu}
\address
{Department of Mathematics \newline \indent
University of California, San Diego \newline \indent
La Jolla, CA, 92093-0112, USA}
\email{mol008@ucsd.edu}

\begin{abstract}
    Let $\xx$ be a matrix of $n \times n$ variables, and let $\CC[\xx]$ be the polynomial ring on these variables. Let $\symm_{n,r}$ be the group of colored permutations, consisting of $n \times n$ complex matrices with exactly one nonzero entry in each row and column, where each nonzero entry is an $r$-th root of unity. We associate an ideal $I_{\symm_{n,r}} \subseteq \CC[\xx]$ with the group $\symm_{n,r}$, and use orbit harmonics to give an ideal-theoretic extension of the Viennot shadow line construction to $\symm_{n,r}$. This extension gives a standard monomial basis of $\CC[\xx]/I_{\symm_{n,r}}$, and introduces an analogous definition of ``longest increasing subsequence'' to the group $\symm_{n,r}$. We examine the extension of Chen's conjecture to this analogy. We also study the structure of $\CC[\xx]/I_{\symm_{n,r}}$ as a graded $\symm_{n,r} \times \symm_{n,r}$ module, which subsequently induces a graded $\symm_{n,r} \times \symm_{n,r}$ module structure on the $\CC$-algebra $\CC[\symm_{n,r}]$.
\end{abstract}

\keywords{Viennot's shadow lines, orbit harmonics, ideals, graded modules}

\maketitle

\section{Introduction}
Longest increasing subsequences in the symmetric group $\symm_n$ have been a well-studied object in combinatorics. In this paper, we apply orbit harmonics and ideal-theoretic extensions of Viennot's shadow line construction to introduce an analogous definition of longest increasing subsequence for the colored permutation group $\symm_{n,r}$.

Let $\symm_n$ be the symmetric group on $[n]=\{1,2,\dots,n\}$. Given $w \in \symm_n$, we write $w$ in its one line notation $w= [w(1),w(2),\dots,w(n)]$. An {\em increasing subsequence of length $l$} of $w$ is a sequence of integers $1 \leq i_1 < i_2 <  \dots  < i_l \leq n$ such that $w(i_1) < w(i_2) < \dots < w(i_l)$. We write 
\begin{equation}
    \lis(w) = \max \{k : w \text{ has an increasing subsequence of length } k\}
\end{equation}
for the length of the longest increasing subsequence of $w$. Let $a_{n,k}$ denote the number of permutations $w \in \symm_n$ with $\lis(w)=k$. For any positive integer $n$, Chen~\cite{chen} conjectured that the sequence $(a_{n,1},a_{n,2},\dots,a_{n,n})$ is log-concave, that is,
\begin{equation}
    a_{n,i} \cdot a_{n,i+2} \leq a_{n,i+1}^2 \text{ for } 1 \leq i \leq n-2.
\end{equation}

Let $\x$ be a finite set of variables, and let $\CC$ be the field of complex numbers. Let $\CC[\x]$ be the polynomial ring with variable set $\x$. If $I$ is a graded ideal in $\CC[\x]$, the {\em Hilbert series} of $\CC[\x]/I$ is 
\begin{equation}
    \Hilb( \CC[\x]/I; q) := \sum_{d \geq 0}  \dim_{\CC} (\CC[\x]/I)_d \cdot q^d,
\end{equation}
where $(\CC[\x]/I)_d$ is the degree $d$ part of the graded vector space $\CC[\x]/I$.

In ~\cite{rhoades2023increasing}, Rhoades studied the following ideal:
\begin{defn}\label{defn:defn1}
    Let $\xx = [x_{i,j}]_{1 \leq i,j \leq n}$ be an $n \times n$ matrix of variables, and let $\CC[\xx]$ be the polynomial ring in these variables. The ideal $I_n$ is the ideal generated by 
    \begin{itemize}
        \item the square $x_{i,j}^2$ for $1 \leq i,j \leq n$ of each variable;
        \item the product $x_{i,j} \cdot x_{i,j'}$ for $1 \leq i \leq n$ and $1 \leq j < j' \leq n$ of two distinct variables in the same row;
        \item the product $x_{i,j} \cdot x_{i',j}$ for $1 \leq i < i' \leq n$ and $1 \leq j \leq n$ of two distinct variables in the same column;
        \item the sum $\sum_{j=1}^{n}x_{i,j}$ for $1 \leq i \leq n$ of variables in the same row;
        \item the sum $\sum_{i=1}^{n}x_{i,j}$ for $1 \leq j \leq n$ of variables in the same column.
    \end{itemize}
\end{defn}

As all the generators of $I_n$ are homogeneous, $\CC[\xx]/I_n$ is a graded $\CC$-algebra. Let $\symm_n \times \symm_n$ act on $\xx$ by permuting rows and columns. This induces an action on $\CC[\xx]$ which stabilizes $I_n$, as the collection of generators of $I_n$ is stable under this action. Thus, $\CC[\xx]$ has the structure of a graded $\symm_n \times \symm_n$ module.

Rhoades proved~\cite{rhoades2023increasing} that $\CC[\xx]/I_n$ has Hilbert series
\begin{equation}
    \Hilb( \CC[\xx] / I_n; q) = a_{n,n} + a_{n,n-1} \cdot q + a_{n,n-2} \cdot q^2  + \dots + a_{n,1} \cdot q^{n-1},
\end{equation}
where coefficients of $q^k$ are the number of permutations $w$ in $\symm_n$ with $\lis(w)=n-k$. Using Viennot's shadow line construction, Rhoades attached a square-free monomial $\sss(w)$ for each $w \in \symm_n$, with the property that
\begin{equation}
    \deg(\sss(w)) = n - \lis(w),
\end{equation}
and showed that (Theorem~\ref{thm:thm1})
\begin{equation}
    \{\sss(w): w \in \symm_n  \}
\end{equation}
is a spanning set of the vector space $\CC[\xx]/I_n$, and proved that this is actually the standard monomial basis with respect to the Toeplitz order (see Definition~\ref{defn:defn3}). Rhoades also studied the graded and ungraded module structure of $\CC[\xx]/I_n$, giving the isomorphism
\begin{equation}
( \CC[\xx] / I_n )_{k} \cong \bigoplus_{\substack{\lambda \, \vdash \, n \\ \lambda_1  \, = \, n-k}} \End_{\CC}(V^{\lambda})
\end{equation}
and gave insights on the Novak-Rhoades conjecture~\cite{novak2020increasing}, which is a strengthening of Chen's conjecture.

We generalize Rhoades's work to colored permutation groups $\symm_{n,r}$, which is the group of $n \times n$ matrices with exactly one nonzero entry in each row or column, and all nonzero entries are $r$-th roots of unity. We specifically emphasize on the case $r=2$, where $\symm_{n,2}$ is denoted as $B_n$, the group of signed permutations. In Section~\ref{sec:sec1}, we will give more detailed descriptions of the properties of these groups. The following ideal is our object of study:

\begin{defn}\label{defn:defn2}
    $I_{\symm_{n,r}}$ is the ideal in $\CC[\xx]$ with generators:
    \begin{itemize}
        \item the $(r+1)$-th power $x_{i,j}^{r+1}$ for $1 \leq i,j \leq n$ of each variable;
        \item the product $x_{i,j} \cdot x_{i,j'}$ for $1 \leq i \leq n$ and $1 \leq j < j' \leq n$ of two distinct variables in the same row;
        \item the product $x_{i,j} \cdot x_{i',j}$ for $1 \leq i < i' \leq n$ and $1 \leq j \leq n$ of two distinct variables in the same column;
        \item the sum $\sum_{j=1}^{n}x_{i,j}^r$ for $1 \leq i \leq n$ of $r$-th powers of variables in the same row;
        \item the sum $\sum_{i=1}^{n}x_{i,j}^r$ for $1 \leq j \leq n$ of $r$-th powers of variables in the same column.
    \end{itemize}
    In particular, when $r=2$, we write $I_{\symm_{n,2}} =I_{B_n}$.
\end{defn}

Again, $I_{\symm_{n,r}}$ is a homogeneous ideal, so $\CC[\xx]/I_{\symm_{n,r}}$ is a graded $\CC$-algebra. Let $\gl_n(\CC) \times \gl_n(\CC)$ act on $\CC[\xx]$, where the action is induced by the following action on $\xx$: let $A,A' \in \gl_n(\CC)$,
\begin{equation}
    (A,A') \cdot \xx = A \xx A'^{-1}.
\end{equation}
Then, treating $\symm_{n,r}$ as a subgroup of $\gl_n(\CC)$, we obtain an action of $\symm_{n,r} \times \symm_{n,r}$ on $\CC[\xx]$. The ideal $I_{\symm_{n,r}}$ is stable under this action, so $\CC[\xx]/I_{\symm_{n,r}}$ has the structure of a graded $\symm_{n,r} \times \symm_{n,r}$ module. Using representation theoretical tools including the Branching Rule (Equation~\eqref{eq:branching}) and the Generalized Murnaghan-Nakayama Rule (Theorem~\ref{M-N}), we give the graded module structure (Theorem~\ref{thm:colored})
\begin{equation}
    (\CC[\xx]/I_{\symm_{n,r}})_{k} \cong \bigoplus_{\substack{  \bl  \vdash_r  n \\ r \lambda^0_1 + \sum_{i=1}^{r-1} i |\lambda^i| = rn-k  }} V^{\bl} \otimes V^{\bl'},
\end{equation}
where $V^{\bl}$ is the irreducible module of $\symm_{n,r}$ labelled by the $r$-tuple of partitions $\bl = (\lambda^0, \lambda^1, \dots , \lambda^{r-1})$ such that $\sum_{i=0}^r |\lambda^i|=n$, and $V^{\bl'}$ is the dual module of $V^{\bl}$. See Section~\ref{background} for details of relevant notations.

Focusing on the case of $B_n$, $b_{n,k}$ (first defined in Definition~\ref{defn:defn4}), which is analogous to $a_{n,k}$, counts the number of signed permutations $w$ in $B_n$ with a property analogous to ``having a longest increasing subsequence of length $k$". For $n \geq 2$, the Hilbert series of $\CC[\xx]/I_{B_n}$ is
\begin{equation}
    \Hilb( \CC[\xx] / I_{B_n}; q) = b_{n,2n} + b_{n,2n-1} \cdot q + b_{n,2n-2} \cdot q^2  + \dots + b_{n,2} \cdot q^{2n-2}.
\end{equation}

By testing $n$ up tp $40$, we observe that the sequence $\{b_{n,k}\}_{2 \leq k \leq 2n}$ is ``almost" log-concave: log-concavity holds for $n \leq 8$; and for larger $n$'s, log-concavity breaks for some small $k$'s and then holds for the remaining majority of the sequence. 

We also extend this notion of ``having longest increasing subsequence of length $k$" to elements of $\symm_{n,r}$, and we define (Definition~\ref{defn:c_n,r,k}) $c_{n,r,k}$ that counts the number of elements in $\symm_{n,r}$ with this property. We prove (Theorem~\ref{thm:hilb_n,r}) that the Hilbert series of $\CC[\xx]/I_{\symm_{n,r}}$ is 
\begin{equation}
    \Hilb(\CC[\xx]/I_{\symm_{n,r}};q) = \sum_{k=1}^{rn} c_{n,r,k}q^{rn-k},
\end{equation}
and also give the conjecture (Conjecture~\ref{conj}) that the sequence $\{c_{n,r,k}\}$ is uni-modal, that is, there exists $k$ such that
\begin{equation}
    c_{n,r,d} \leq c_{n,r,d+1} \text{ for } d < k \text{ and } c_{n,r,d} \geq c_{n,r,d+1} \text{ for } d \geq k.
\end{equation}

The rest of the paper is structured as follows. In Section~\ref{background}, we introduce the background material on Gröbner theory, orbit harmonics, group structure of $\symm_{n,r}$, the Schensted correspondence, and representation theory. In Section~\ref{SMB}, we generalize Viennot's shadow line construction to the group $\symm_{n,r}$, and use this construction to find a standard monomial basis of $\CC[\xx]/I_{\symm_{n,r}}$. In Section~\ref{Module} we give the graded $\symm_{n,r} \times \symm_{n,r}$ module structure of $\CC[\xx]/I_{\symm_{n,r}}$. And in Section~\ref{Conjecture}, we examine the generalization of Chen's conjecture, and close with possible directions of future research.

\section{Background}~\label{background}
\subsection{Gröbner Theory}
Let $\x_N = (x_1 , x_2 , \dots , x_N)$ be a finite list of variables, and let $\CC[\x_N]$ be the polynomial ring over these variables. A total order $<$ on the set of monomials in $\CC[\x_N]$ is called a \textit{monomial order} if 

\begin{itemize}
    \item $1 \leq m$ for any monomial $m$,
    \item for any monomials $m_1, m_2, m_3$, we have that $m_1 < m_2$ implies $m_1 m_3 < m_2 m_3$.
\end{itemize}

Given any polynomial $f$ in $\CC[\x_N]$, the {\em initial monomial} $\textrm{in}_{<}f$ of $f$ is defined to be the largest monomial term in $f$ with respect to $<$. The \textit{initial ideal} of an ideal $I$ in $\CC[\x_N]$ is the ideal generated by the initial monomials of all polynomials in $I$, i.e.
\begin{equation}
\textrm{in}_< I = \langle \textrm{in}_< f : f \in I \rangle 
\end{equation}

A monomial in $\CC[\xx]$ which is not an element of $\textrm{in}_{<} I$ is called a {\em standard monomial}. It is known that 
\begin{equation}
\{m+ I : m \text{ is a standard monomial} \} 
\end{equation}
forms a basis of the vector space $\CC[\x_N]/I$, and this is called the \textit{standard monomial basis}.
More detailed descriptions of Gröbner theory can be found in~\cite{10.5555/1951670}.

\subsection{Orbit Harmonics}
Given $Z$ a finite locus of points in $\CC^N$, the {\em vanishing ideal} $\II(Z) \subseteq \CC[\x_N]$ is the collection of all polynomials $f$ satisfying $f(x) = 0$ for all $x$ in $Z$. It is known that we have the isomorphism of vector spaces
\begin{equation}\label{eq:eqn1}
\CC[\x_N]/\II(Z) \cong \CC[Z],
\end{equation}
where we see $\CC[Z]$ as the vector space of functions from $Z$ to $\CC$. Note that the vector space $\CC[\x_N]/\II(Z)$ is usually ungraded. 

Given a polynomial $f$ in $\CC[\x_N]$, we can write $f = f_0 + f_1 + \dots +f_d$ where $f_i$ is homogeneous of degree $i$ and $f_d$ is nonzero. Denote $\tau(f)$ as the highest degree homogeneous part of $f$, i.e. $\tau(f) = f_d$. Given an ideal $I$, the \textit{associated graded ideal} $\gr I$ is defined to be 
\begin{equation}
\gr I = \langle \tau(f):f \in I \rangle.
\end{equation}

The isomorphism~\eqref{eq:eqn1} of ungraded vector spaces above can be further extended to 
\begin{equation}\label{eq:eqn2}
\CC[\x_N]/\II(Z) \cong \CC[Z] \cong \CC[\x_N]/\gr \II(Z) ,
\end{equation}
and since $\tau(f)$ is homogeneous for all $f$, $\CC[\x_N]/\gr I$ has an additional structure as a graded vector space.

Orbit harmonics has seen applications to presenting cohomology rings \cite{GARSIA199282}, Macdonald theory \cite{HRS}, cyclic sieving \cite{OR}, Donaldson-Thomas theory \cite{RRT}, and Ehrhart theory \cite{reiner2024harmonicsgradedehrharttheory}. One expects algebraic properties of $\CC[\x_N]/\gr \II(Z)$ to be governed by combinatorial properties of $Z$.

\subsection{The Schensted Correspondence}
Let $n$ be any positive integer. A \textit{partition} $\lambda$ of $n$ is a weakly decreasing sequence of positive integers $\lambda = (\lambda_1, \lambda_2, \dots , \lambda_r)$ such that $\sum_{i=1}^{r} \lambda_i = n$. We use the notation $\lambda \vdash n$ to indicate that $\lambda$ is a partition of $n$. 

A \textit{Young diagram} of shape $\lambda$ is a diagram of boxes with $\lambda_i$ boxes in the $i$-th row. A \textit{standard Young tableau} of shape $\lambda$ is a filling of $\{1,2,\dots ,n\}$ into the Young diagram of $\lambda$ such that each number appears exactly once, and that the entries are increasing across rows and down columns. For example, given $\lambda = (4,2,1) \vdash 7$, we give the Young diagram of $\lambda$ and one standard Young tableau of shape $\lambda$.
\begin{center}
    
\begin{ytableau}
    \: & \: & \: & \: \cr
       & \cr
    \:
\end{ytableau} \quad \quad
\begin{ytableau}
    1 & 3 & 6 & 7 \cr
    2 & 5 \cr
    4
\end{ytableau}

\end{center}

We write $\SYT(\lambda)$ to denote the collection of standard Young tableaux of shape $\lambda$. The {\em Schensted correspondence}~\cite{schensted_1961} is a bijection:
\begin{equation}
    \symm_n \xrightarrow{\quad \sim \quad} \bigsqcup_{\lambda \vdash n} \{ (P,Q): P,Q \in \SYT (\lambda)\}, 
\end{equation}
that is, the Schensted correspondence associates to each $w \in \symm_n$ an ordered pair $(P,Q)$ of standard Young tableaux of the same shape. The explicit bijection is usually given by an insertion algorithm, details of which can be found in ~\cite{sagan2013symmetric}.

\subsection{Signed and Colored Permutation Groups}\label{sec:sec1}
Recall that the group $\symm_{n,r}$ consists of all $n \times n$ complex matrices with exactly one nonzero entry in each row and column, and each nonzero entry is an $r$-th root of unity. We introduce a more combinatorial interpretation of the group in this section:

The \textit{$r$-colored permutation group} $\symm_{n,r}$ is defined to be the wreath product $(\ZZ/r\ZZ) \wr \symm_n$. In particular, when $r=2$, the group is called the \textit{signed permutation group}, and we denote the group as $B_n$. An element $w$ of $\symm_{n,r}$ can be viewed as a pair $(\sigma, \kappa)$, where $\sigma$ is a permutation in $\symm_n$, and $\kappa : [n] \rightarrow \{0,1,\dots,r-1\}$ is a coloring of elements in $[n]$. Writing $[n]^r$ for the set $\{i^j : i \in [n], j \in \{0,1,\dots,r-1\} \}$, we can see $w=(\sigma,\kappa)$ as a function $[n]^r \rightarrow [n]^r$ via the rule
\begin{equation}
    (w)(i^j) = \sigma(i)^{\kappa(\sigma(i))+j},
\end{equation}
where addition is modulo $r$. An example is when $n=5, r=3$, and $w= (\sigma,\kappa)$ where $\sigma=[4,2,5,3,1]=(1435)(2), \kappa=(2,1,2,2,0)$, then we have $w(1^0) = 4^2,w(2^0) = 2^1,w(3^0) = 5^0,w(4^0) = 3^2,w(5^0) = 1^2$. In one-line notation, we denote $w=[4^2,2^1,5^0,3^2,1^2]$, and we write the cycle notation of $w$ as $(1^2,4^2,3^2,5^0)(2^1)$. For a cycle in the cycle decomposition of $w$, we refer to the \textit{color} of the cycle to be the sum of colors of elements in it, reduced modulo $r$. For example, the color of $(1^2,4^2,3^2,5^0)$ is $2+2+2 = 0 \mod 3$, and the color of $(2^1)$ is $1$.

\begin{defn}
    Given integers $n \geq 0$, $r \geq 1$, an {\em $r$-partition} of $n$ is a sequence of $r$ partitions $\boldsymbol{\lambda} = (\lambda^0, \lambda^1, \dots , \lambda^{r-1})$ such that $\sum_{i=0}^{r-1} |\lambda^i| = n$. We write $\bl \vdash_r n$ for $\bl$ being an $r$-partition of $n$. In particular, when $r=2$, we call such $r$-partitions {\em bipartitions}.
\end{defn}

\begin{defn}
    The {\em cycle type} of an element $w=(\sigma,\kappa) \in \symm_{n,r}$ is the $r$-partition $\boldsymbol{\lambda}=(\lambda^0,\lambda^1,\dots,\lambda^{r-1})$ such that $\lambda^i$ consists of lengths of cycles in $w$ of color $i$.
\end{defn}
Two elements in $\symm_{n,r}$ are conjugates if and only if they have the same cycle type. Thus, the conjugacy classes of $\symm_{n,r}$ are labelled by $r$-partitions of $n$.

\subsection{Representation Theory}

\subsubsection{$\symm_n$ Representations}
Let $\symm_n$ be the symmetric group. As the conjugacy classes of $\symm_n$ are labelled by partitions of $n$, there is a bijection between partitions of $n$ and irreducible representations of $\symm_n$ over $\CC$. For $\lambda \vdash n$, we denote $V^\lambda$ as the irreducible representation of $\symm_n$ associated to $\lambda$. The dimension of $V^\lambda$ is given by $|\SYT(\lambda)|$, the number of standard Young tableau of shape $\lambda$.

\subsubsection{Representations of Signed and Colored Permutations}
Let $\boldsymbol{\lambda}=(\lambda^0, \lambda^2, \dots , \lambda^{r-1}) \vdash_r n$. A standard Young tableau of shape $\boldsymbol{\lambda}$ is a bijection from $[n]$ to the collection of boxes in the disjoint union of Young diagrams of $\lambda^0$, $\lambda^1$, \dots,$\lambda^{r-1}$ such that the entries in each diagram are increasing across rows and down columns. Below is an example of a standard Young tableau for bipartition $((2,1),(4,2)) \vdash 9$. 
\begin{center}
    \begin{ytableau}
        1 & 5 \cr
        7 \cr 
    \end{ytableau} \quad \quad
    \begin{ytableau}
        2 & 4 & 8 & 9 \cr
        3 & 6 \cr
    \end{ytableau}
\end{center}
Specht constructed~\cite{Specht1932} a bijection between the irreducible representations of $\symm_{n,r}$ and $r$-partitions of $n$ such that the dimension of the irreducible representation labelled by $\bl$ is the number of standard Young tableaux of shape $\bl$.

\subsection{Branching Rule and Murnaghan-Nakayama Rule}

Let $\lambda \vdash n_1$ and $\mu \vdash n_2$, with the property that $\lambda_i \geq \mu_i$ for all $i$, and let $n=n_1 - n_2$. The {\em skew partition $\lambda / \mu$} has diagram obtained by deleting the diagram of $\mu$ from the diagram of $\lambda$. For example, when $\lambda = (4,3,2) \vdash 9$ and $\mu = (2,1) \vdash 3$, the skew partition $\lambda / \mu$ has diagram drawn below.
\begin{center}
    \begin{ytableau}
      \none  & \none  & \: & \: \cr
      \none  &\: & \:  \cr
     \: & \:
    \end{ytableau}
\end{center}

The $\symm_n$ character $\chi^{\lambda/\mu}$ is defined to be
\begin{equation}
    \chi^{\lambda/\mu} = \sum_{|\nu| = n}  c^{\lambda}_{\mu, \nu} \chi^{\nu}
\end{equation}
where the sum is over all partitions $\nu$ of $n=n_1-n_2$ and $c^{\lambda}_{\mu,\nu}$ is the Littlewood-Richardson coefficient.

More generally, if $\bl = (\lambda^0 , \lambda^1, \dots ,\lambda^{r-1})$ and $\bm = (\mu^0, \mu^1, \dots ,\mu^{r-1})$ are $r$-partitions such that $\lambda^i/ \mu^i$ are all well-defined skew partitions, the {\em $r$-skew partition $\bl/\bm$} is then
\begin{equation}
    \bl/\bm = (\lambda^0/\mu^0, \lambda^1/\mu^1, \dots , \lambda^{r-1}/\mu^{r-1}).
\end{equation} 
The size of $\bl/\bm$ is $| \bl/\bm | = \sum_{i=0}^{r-1}|\lambda^i|-|\mu^i|$.

Given an $r$-skew partition $\bl/\bm$ such that $|\bl/\bm|=n$, we also have a character $\chi^{\bl/\bm}$ of $\symm_{n,r}$ associated to it. It is defined to be 
\begin{equation}
    \chi^{\bl/\bm} = \sum_{\bn \vdash_r n}  c^{\bl}_{\bm, \bn} \chi^{\bn}
\end{equation}
where the sum is over all $r$-partitions $\bn=(\nu^1,\nu^2,\dots,\nu^r) \vdash_r n$, and $c^{\bl}_{\bm, \bn}$ is the {\em generalized Littlewood-Richardson coefficient}, defined as
\begin{equation}
    c^{\bl}_{\bm, \bn}=\prod_{i=1}^{r} c^{\lambda^i}_{\mu^i, \nu^i},
\end{equation}
which is the product of Littlewood-Richardson coefficients.

\subsubsection{Branching Rule}
The characters labelled by $r$-skew partitions play an important role in the branching rule of $\symm_{n,r}$ representations. Let $\bl$ be an $r$-partition of $\symm_{n,r}$. For $k < n$, $\symm_{k,r} \times \symm_{n-k,r}$ is a natural subgroup of $\symm_{n,r}$, and we have the branching rule
\begin{equation}\label{eq:branching}
    \Res^{\symm_{n,r}}_{\symm_{k,r} \times \symm_{n-k,r}} \chi^{\bl} = \sum_{\bm} \chi^{\bm} \times \chi^{\bl/\bm}
\end{equation}
where the sum is over all $r$-partitions $\bm \vdash_r k$ such that $\bl/\bm$ is well defined. The proof of the branching rule can be found in~\cite{pjm/1102646930}. 

\subsubsection{Generalized Murnaghan-Nakayama Rule}
We then recall how to evaluate $\chi^{\bl/\bm}$ on $\symm_{n,r}$. A {\em ribbon} is a skew partition with a rookwise connected diagram that does not contain a $2 \times 2$ square as a sub-diagram, and the {\em height} of a ribbon is defined as the number of rows in it minus $1$. The diagram at the beginning of the section is an example of a ribbon of height $2$. We say that an $r$-skew partition $\bl/\bm$ is a {\em $r$-ribbon} if only one of the skew partitions $\lambda^i/\mu^i$ is non-empty and the non-empty skew partition is a ribbon. If $\bl/\bm$ is an $r$-ribbon with $\lambda^j/\mu^j$ non-empty, we say $\theta$ is the {\em character of $\bl/\bm$}, where $\theta$ is the irreducible character of $\ZZ/r\ZZ$
\begin{equation}
    \theta : \ZZ/r\ZZ \rightarrow \gl _1(\CC)  \quad \quad 
    \theta(z) = e^{2j \pi i z/r}.
\end{equation}

A decomposition of $\bl/\bm$ into $r$-ribbons is a nested sequence of $r$-partitions
\begin{equation}\label{nested-seq}
    \bm = \bl_0 \subseteq \bl_1 \subseteq \dots \subseteq \bl_l = \bl 
\end{equation}
where $\bl_i/\bl_{i-1}$ is an $r$-ribbon for $1 \leq i \leq l$. Let $\beta_i = |\bl_i/\bl_{i-1}|$ and $\beta = (\beta_1,\beta_2,\dots,\beta_l)$, we call the decomposition in~\eqref{nested-seq} a {\em $\beta$-decomposition}. With this definition, we introduce the generalized Murnaghan-Nakayama rule.

\begin{theorem}\label{M-N}
    Let $w$ be an element of $\symm_{n,r}$. Suppose the cycles of $x$ have lengths $\beta_1,\beta_2,\dots,\beta_l$, and the cycles have colors $c_1,c_2,\dots,c_l$. Then, we have the evaluation of $\chi^{\bl/\bm}$ on $w$:
    \begin{equation}\label{eq:M-N}
        \chi^{\bl/\bm}(w)= \sum_{\bl'} \prod_{i=1}^{l}(-1)^{h_i} \theta_i (c_i)
    \end{equation}
where the sum is over $\bl'$ of $\beta$-decompositions of $\bl/\bm$, $h_i$ is the height of the $r$-ribbon in the $i$-th step of the $\beta$-decomposition, and $\theta_i$ is the character of the $r$-ribbon in the $i$-th step as defined above.
\end{theorem}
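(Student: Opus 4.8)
The plan is to reduce the rule to two ingredients: a \emph{single-cycle computation}, which contains all the new content, and a purely formal \emph{peeling step} that strips one cycle off $w$ at a time; I would induct on the number $l$ of cycles of $w$, the case $l=0$ (so $n=0$, $\bl/\bm$ empty, both sides equal to $1$) being trivial. First I would promote the branching rule~\eqref{eq:branching} from irreducible to skew characters. Starting from the definition $\chi^{\bl/\bm}=\sum_{\bn}c^{\bl}_{\bm,\bn}\chi^{\bn}$, applying~\eqref{eq:branching} to each $\chi^{\bn}$, re-expanding the skew characters that appear, and using the componentwise associativity $\sum_{\bn}c^{\bl}_{\bm,\bn}c^{\bn}_{\bq,\br}=\sum_{\bp}c^{\bp}_{\bm,\bq}c^{\bl}_{\bp,\br}$ of the generalized Littlewood--Richardson coefficients (both sides compute the coefficient of $s_{\bl}$ in $s_{\bm}s_{\bq}s_{\br}$ in each of the $r$ symmetric-function slots), one obtains
\[
\Res^{\symm_{n,r}}_{\symm_{k,r}\times\symm_{n-k,r}}\chi^{\bl/\bm}\;=\;\sum_{\bm\subseteq\bp\subseteq\bl,\ |\bp/\bm|=k}\chi^{\bp/\bm}\times\chi^{\bl/\bp}.
\]
Writing $w$ (up to conjugacy) as $u\times v\in\symm_{\beta_1,r}\times\symm_{n-\beta_1,r}$, with $u$ a single $\beta_1$-cycle of color $c_1$ and $v$ carrying the remaining $l-1$ cycles, and evaluating this identity on $u\times v$, yields $\chi^{\bl/\bm}(w)=\sum_{\bp}\chi^{\bp/\bm}(u)\,\chi^{\bl/\bp}(v)$.

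The crux is the $l=1$ statement: for a single $m$-cycle $u$ of color $c$ and an $r$-skew shape $\bl/\bm$ of size $m$, $\chi^{\bl/\bm}(u)=(-1)^{h}\theta(c)$ when $\bl/\bm$ is an $r$-ribbon of height $h$ and character $\theta$, and $\chi^{\bl/\bm}(u)=0$ otherwise. I would prove this in four moves. (i) Realize the irreducible $\chi^{\bn}$ with $\bn=(\nu^0,\dots,\nu^{r-1})$, $|\nu^d|=m_d$, as the character induced from $\symm_{m_0,r}\times\cdots\times\symm_{m_{r-1},r}$ of the external product of the ``linear'' irreducibles, the $d$-th factor being the inflation of $V^{\nu^d}$ along $\symm_{m_d,r}\twoheadrightarrow\symm_{m_d}$ twisted by the one-dimensional character $w\mapsto\theta_d(\text{total color of }w)$ (the standard construction of Specht; see~\cite{Specht1932, pjm/1102646930}). (ii) Apply the induced-character formula: a single $m$-cycle has no conjugate inside the Young-type subgroup unless that cycle fits in one block, which forces all but one $\nu^d$ to be empty with $|\nu^d|=m$; in that concentrated case the subgroup is all of $\symm_{m,r}$, so $\chi^{\bn}(u)=\theta_d(c)\cdot\chi^{\nu^d}_{\symm_m}(\bar u)$, where $\bar u$ is the underlying $m$-cycle. (iii) Invoke the classical $\symm_m$ fact that $\chi^{\nu}_{\symm_m}$ of an $m$-cycle equals $(-1)^{\mathrm{leg}(\nu)}$ if $\nu$ is a hook and $0$ otherwise. (iv) Feed this into $\chi^{\bl/\bm}(u)=\sum_{\bn}c^{\bl}_{\bm,\bn}\chi^{\bn}(u)$: the surviving terms force $\lambda^{d'}=\mu^{d'}$ for $d'\neq d$ and reduce the slot-$d$ contribution to $\sum_{\nu\ \mathrm{hook},\,|\nu|=m}(-1)^{\mathrm{leg}(\nu)}\,c^{\lambda^d}_{\mu^d,\nu}$, which by the classical single-cycle Murnaghan--Nakayama rule is $(-1)^{\mathrm{ht}(\lambda^d/\mu^d)}$ when $\lambda^d/\mu^d$ is an $m$-ribbon and $0$ otherwise. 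The conjunction ``$\bl$ and $\bm$ agree off slot $d$ and $\lambda^d/\mu^d$ is a ribbon of size $m$'' is exactly ``$\bl/\bm$ is an $r$-ribbon'', with character $\theta_d=\theta$ and height $\mathrm{ht}(\lambda^d/\mu^d)=h$, giving the claim.

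With the single-cycle case in hand, the sum $\chi^{\bl/\bm}(w)=\sum_{\bp}\chi^{\bp/\bm}(u)\chi^{\bl/\bp}(v)$ collapses to a sum over those $\bp$ with $\bp/\bm$ an $r$-ribbon of size $\beta_1$, each contributing a factor $(-1)^{h_1}\theta_1(c_1)$; the inductive hypothesis then expands $\chi^{\bl/\bp}(v)$ as a sum over $(\beta_2,\dots,\beta_l)$-decompositions of $\bl/\bp$. Since a $\beta$-decomposition of $\bl/\bm$ is precisely a choice of such $\bp$ followed by a $(\beta_2,\dots,\beta_l)$-decomposition of $\bl/\bp$, the two sums merge into $\sum_{\bl'}\prod_{i=1}^{l}(-1)^{h_i}\theta_i(c_i)$ over all $\beta$-decompositions of $\bl/\bm$, which is exactly~\eqref{eq:M-N}.

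I expect the main obstacle to be move (ii) above: getting the induced-character bookkeeping right, so that there is no spurious multiplicity and so that it is the color of the \emph{cycle} (the sum of the colors of its letters, reduced mod $r$) that the character $\theta_d$ detects. Everything else reduces to the classical type-$A$ Murnaghan--Nakayama rule or to formal manipulation of Littlewood--Richardson coefficients. An alternative would be to run the entire argument through the wreath-product analogue of the ring of symmetric functions, where the single-cycle case becomes a Pieri/Murnaghan--Nakayama expansion of a power sum in the character-indexed alphabet together with a Fourier coefficient $\theta(c)$; I would mention this but carry out the induced-character version, since branching and Littlewood--Richardson coefficients are already available in the paper while that ring is not.
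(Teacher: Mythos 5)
The paper does not prove Theorem~\ref{M-N} at all; immediately after the statement it writes ``The proof of Theorem~\ref{M-N} can also be found in~\cite{pjm/1102646930}'' and moves on, so there is no in-paper argument to compare against. Your proposal is therefore a genuine, self-contained derivation of a result the paper merely cites, and on reading it carefully I believe it is correct. The logical skeleton is sound: the skew branching identity you derive from~\eqref{eq:branching} plus the componentwise associativity of the generalized Littlewood--Richardson coefficients is exactly right, the factorization $\chi^{\bl/\bm}(w)=\sum_{\bp}\chi^{\bp/\bm}(u)\chi^{\bl/\bp}(v)$ follows because character values are class functions and the cycles can be conjugated into the parabolic $\symm_{\beta_1,r}\times\symm_{n-\beta_1,r}$, and the induction on the number of cycles then reduces everything to the single-cycle computation.

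On the single-cycle case, the Clifford-theory bookkeeping you worry about in move (ii) does in fact go through cleanly. For $\bn$ with $|\nu^d|=m_d$, the irreducible is induced from $H=\prod_d\symm_{m_d,r}$, and an $m$-cycle has a conjugate in $H$ only if it lands in a single block, forcing $m_{d_0}=m$ for one $d_0$ and all other $\nu^{d}$ empty; in that case $H=\symm_{m,r}$ so there is no Frobenius sum and no multiplicity issue, and $\chi^{\bn}(u)=\theta_{d_0}(\sum_i\kappa(i))\,\chi^{\nu^{d_0}}(\sigma)$. Since the color of a cycle is precisely the sum of the colors of its letters reduced mod $r$ (the paper's own definition in Section~\ref{sec:sec1}), the twist $\theta_{d_0}(\text{total color})$ evaluates to $\theta_{d_0}(c)$, which is the detail you flagged as the likely obstacle. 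Feeding this into $\chi^{\bl/\bm}(u)=\sum_{\bn}c^{\bl}_{\bm,\bn}\chi^{\bn}(u)$, the empty slots force $\lambda^{d'}=\mu^{d'}$ for $d'\ne d_0$ via $c^{\lambda^{d'}}_{\mu^{d'},\emptyset}=\delta_{\lambda^{d'},\mu^{d'}}$, and the surviving sum $\sum_{\nu\ \mathrm{hook}}(-1)^{\mathrm{leg}(\nu)}c^{\lambda^{d_0}}_{\mu^{d_0},\nu}=\langle s_{\lambda^{d_0}/\mu^{d_0}},p_m\rangle$ is handled by the classical skew Murnaghan--Nakayama rule. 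So the proof closes; the only thing I would add in a write-up is a sentence making explicit that the LR coefficients automatically confine the intermediate shape $\bp$ to satisfy $\bm\subseteq\bp\subseteq\bl$, so the sum in your skew-branching identity really is over well-defined $r$-skew shapes. Relative to the paper's approach (a pointer to~\cite{pjm/1102646930}), yours buys self-containment at the modest cost of quoting the type-$A$ single-cycle rule and the Specht/Clifford construction, both of which the paper already takes as background.
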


The proof of Theorem~\ref{M-N} can also be found in~\cite{pjm/1102646930}.

\section{Standard Monomial Basis}\label{SMB}
\subsection{Viennot Shadow}
Given a permutation $w \in \symm_n$, we associate to it a diagram on a $n \times n$ grid, labelling all points $(i,w(i))$ on the grid. Below is the diagram of $w = [5,1,3,6,7,2,4] \in \symm_7$. 
\begin{center}
    \begin{tikzpicture}[x=1.5em,y=1.5em]
    \draw[step=1,black,thin] (0,0) grid (6,6);
    \filldraw [blue] (0,4) circle (2pt);
    \filldraw [blue] (1,0) circle (2pt);
    \filldraw [blue] (2,2) circle (2pt);
    \filldraw [blue] (3,5) circle (2pt);
    \filldraw [blue] (4,6) circle (2pt);
    \filldraw [blue] (5,1) circle (2pt);
    \filldraw [blue] (6,3) circle (2pt);
    \end{tikzpicture} 
\end{center}

Viennot proved~\cite{viennot} that the diagram can be used to construct the pair of standard Young tableau in the Schensted correspondence. Imagine a light source at the bottom left corner which shines northeast. Each point $(i,w(i))$ in the diagram blocks light to its northeast. Consider the boundary of the shadow region, and call it the {\em first shadow line}. The first shadow line for the permutation $w$ above is depicted in the leftmost diagram below. Then, remove all points on the first shadow line and iterate the process, we get the {\em second shadow line}, {\em the third shadow line}, and so on. The diagram below in the middle depicts all shadow lines of permutation $w = [5,1,3,6,7,2,4]$. 

\begin{center}
    \begin{tikzpicture}[x=1.4em,y=1.4em, very thick,color = blue]
    \draw[step=1,black,thin] (0,0) grid (6,6);
    \filldraw [black] (0,4) circle (2pt);
    \filldraw [black] (1,0) circle (2pt);
    \filldraw [black] (2,2) circle (2pt);
    \filldraw [black] (3,5) circle (2pt);
    \filldraw [black] (4,6) circle (2pt);
    \filldraw [black] (5,1) circle (2pt);
    \filldraw [black] (6,3) circle (2pt);
    \draw(0,7)--(0,4)--(1,4)--(1,0)--(7,0);
    \end{tikzpicture} \quad \quad
    \begin{tikzpicture}[x=1.4em,y=1.4em, very thick,color = blue]
    \draw[step=1,black,thin] (0,0) grid (6,6);
    \filldraw [black] (0,4) circle (2pt);
    \filldraw [black] (1,0) circle (2pt);
    \filldraw [black] (2,2) circle (2pt);
    \filldraw [black] (3,5) circle (2pt);
    \filldraw [black] (4,6) circle (2pt);
    \filldraw [black] (5,1) circle (2pt);
    \filldraw [black] (6,3) circle (2pt);
    \draw(0,7)--(0,4)--(1,4)--(1,0)--(7,0);
    \draw(2,7)--(2,2)--(5,2)--(5,1)--(7,1);
    \draw(3,7)--(3,5)--(6,5)--(6,3)--(7,3);
    \draw(4,7)--(4,6)--(7,6);
    \end{tikzpicture} \quad \quad
    \begin{tikzpicture}[x=1.4em,y=1.4em, very thick,color = blue]
    \draw[step=1,black,thin] (0,0) grid (6,6);
    \filldraw [black] (0,4) circle (2pt);
    \filldraw [black] (1,0) circle (2pt);
    \filldraw [black] (2,2) circle (2pt);
    \filldraw [black] (3,5) circle (2pt);
    \filldraw [black] (4,6) circle (2pt);
    \filldraw [black] (5,1) circle (2pt);
    \filldraw [black] (6,3) circle (2pt);
    \draw(0,7)--(0,4)--(1,4)--(1,0)--(7,0);
    \draw(2,7)--(2,2)--(5,2)--(5,1)--(7,1);
    \draw(3,7)--(3,5)--(6,5)--(6,3)--(7,3);
    \draw(4,7)--(4,6)--(7,6);
    \filldraw [red] (1,4) circle (2pt);
    \filldraw [red] (5,2) circle (2pt);
    \filldraw [red] (6,5) circle (2pt);
    \end{tikzpicture}
    
\end{center}
Suppose the shadow lines of $w$ are $L_1,L_2,\dots,L_k$, and we have $w \rightarrow (P(w),Q(w))$ under the Schensted correspondence, Viennot proved~\cite{viennot} the $y$ coordinates of the infinite horizontal rays of $L_1, L_2,\dots ,L_k$ form the first row of $P(w)$, and the $x$ coordinates of the infinite vertical rays of $L_1, L_2,\dots ,L_k$ for the first row of $Q(w)$. In the example above, we have the first row of $P(w)$ is $1,2,4,7$; and the first row of $Q(w)$ is $1,3,4,5$.

\begin{defn}
    Given the permutation $w \in \symm_n$, the {\em shadow set $\SSS(w)$} of $w$ is the collection of points that lie on the northeast corners of the shadow lines.
\end{defn}
In the example above, we have that for $w = [5,1,3,6,7,2,4] \in \symm_7$, $\SSS(w) = \{(2,5),(6,3),(7,6)\}$.
\begin{remark}
    From the construction of shadow lines, we can see that the shadow set of $w$ will contain no two points in the same row or column, which will induce the following definition.
\end{remark}

\begin{defn}
    A subset $\RR$ of points in the $n \times n$ diagram is a {\em (non-attacking) rook placement} if it contains at most one point in each row or column.
\end{defn}

As mentioned in the remark, the shadow set of any permutation is a rook placement. However, not every rook placement is the shadow set of a permutation. An algorithm for identifying whether a rook placement is the shadow set of a permutation is specified in~\cite{rhoades2023increasing}. 

After obtaining the shadow set of a permutation, we can iterate the shadow line construction on $\SSS(w)$, as depicted in the diagram below on the left. Viennot proved~\cite{viennot} that the $y$ coordinates of the infinite horizontal rays of the new shadow lines correspond to the second row of $P(w)$, and the $x$ coordinates of the infinite vertical rays correspond to the second row of $Q(w)$. Keep iterating this process on the new shadow set, and we will get the third row of the pair of tableau. The process ends when the new shadow set is empty, and we would have the complete pair of tableaux $(P(w),Q(w))$.

\begin{center}
    \begin{tikzpicture}[x=1.4em,y=1.4em, very thick,color = blue]
    \draw[step=1,black,thin] (0,0) grid (6,6);
    \filldraw [black] (1,4) circle (2pt);
    \filldraw [black] (5,2) circle (2pt);
    \filldraw [black] (6,5) circle (2pt);
    \filldraw [red] (5,4) circle (2pt);
    \draw(1,7)--(1,4)--(5,4)--(5,2)--(7,2);
    \draw(6,7)--(6,5)--(7,5);
    \end{tikzpicture} \quad \quad
    \begin{tikzpicture}[x=1.4em,y=1.4em, very thick,color = blue]
    \draw[step=1,black,thin] (0,0) grid (6,6);
    \filldraw [black] (5,4) circle (2pt);
    \draw(5,7)--(5,4)--(7,4);
    \end{tikzpicture}
\end{center}

Given the connections between the shadow sets and the Schensted correspondence, the following lemma is immediate:
\vspace{0.1in}
\begin{lemma}
~\label{lem:size}
    Let $w \in \symm_n$. The size of the shadow set is $|\SSS(w)| = n - \lis(w)$, where $\lis(w)$ is the length of the longest increasing subsequence of $w$, as defined in the introduction.
\end{lemma}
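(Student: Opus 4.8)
The plan is to count the elements of $\SSS(w)$ one shadow line at a time. The two ingredients I would lean on are already available: by Viennot's theorem the heights of the infinite horizontal rays of the shadow lines $L_1, \dots, L_k$ of $w$ form the first row of $P(w)$, so the number of shadow lines equals the length of that row; and by Schensted's theorem the length of the first row of $P(w)$ is $\lis(w)$. Hence $w$ has exactly $\lis(w)$ shadow lines.

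Next I would analyze a single shadow line. Since the first shadow line runs along the minimal points of the diagram of $w$ in the coordinatewise partial order, the second along the minimal points of what remains, and so on, every point $(i,w(i))$ lies on exactly one shadow line, and the points lying on a fixed shadow line form an antichain. If a shadow line passes through the points $(i_1,y_1), \dots, (i_m,y_m)$ with $i_1 < \dots < i_m$, so that $y_1 > \dots > y_m$, then it is the staircase consisting of the vertical ray above $(i_1,y_1)$, the alternating horizontal and vertical segments through $(i_2,y_1),(i_2,y_2),(i_3,y_2),(i_3,y_3),\dots$, and the horizontal ray to the right of $(i_m,y_m)$. Its northeast corners are precisely $(i_2,y_1),(i_3,y_2),\dots,(i_m,y_{m-1})$: there are $m-1$ of them, and none of them is a diagram point of $w$ since $w(i_{t+1}) = y_{t+1} \neq y_t$. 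Thus a shadow line carrying $m$ diagram points contributes exactly $m-1$ elements to $\SSS(w)$.

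Finally, writing $m_1, \dots, m_{\lis(w)}$ for the numbers of diagram points on the $\lis(w)$ shadow lines, the previous step gives $|\SSS(w)| = \sum_j (m_j - 1) = \big(\sum_j m_j\big) - \lis(w) = n - \lis(w)$, because the diagram points of $w$ are partitioned among the shadow lines. The only step with genuine content is the corner count for a single shadow line — verifying that its northeast (reflex) corners are exactly the $m-1$ points exhibited above and that these are the points the definition of $\SSS(w)$ selects; everything else is bookkeeping layered on top of Viennot's and Schensted's theorems, together with the observation that the iterative shadow-line construction really does partition the diagram points.
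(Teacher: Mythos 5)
Your argument is correct, and it fills in exactly the reasoning the paper leaves implicit when it declares the lemma ``immediate'' from Viennot's correspondence: the shadow lines partition the $n$ diagram points, there are $\lis(w)$ of them by Viennot--Schensted, and each line through $m$ diagram points is a staircase contributing $m-1$ northeast corners, so $|\SSS(w)| = n - \lis(w)$. One small point worth adding for completeness is that these corners are distinct across different shadow lines (their $x$-coordinates $i_2,\dots,i_m$ are $x$-coordinates of diagram points of $w$, hence all distinct), so the per-line counts really do add up.
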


\subsection{Shadow monomials}
Given a collection of points $S \in [n] \times [n]$, we denote $m(S)$ to be the square-free monomial $\prod_{(i,j) \in S}x_{i,j}$. Let $I_n$ be defined as in the introduction. It is straightforward that 
\begin{equation*}
    \{ m(\RR) : \RR \text{ is a rook placement} \}
\end{equation*}
is a spanning set of $\CC[\xx]/I_n$, since the product of any two variables in the same row or column is in $I_n$.

Recall that given a set of variables $y_1,y_2,\dots,y_N$, the {\em lexicographical order} is defined such that $y_1 > y_2 > \dots > y_N$ and $y_1^{a_1} y_2^{a_2} \dots y_N^{a_n} < y_1^{b_1} y_2^{b_2} \dots y_N^{b_N}$ if and only if there exists $j \leq n$ such that $a_i = b_i$ for all $i < j$, and $a_j < b_j$. The standard monomial basis of $\CC[\xx]/I_n$ was computed in~\cite{rhoades2023increasing} with respect to the monomial order defined as follows. 

\begin{defn}\label{defn:defn3}
    The {\em Toeplitz order}, denoted by $<_{\topl}$, is the lexicographical order with respect to the following ordering on the variables:
    \begin{equation}
        x_{1,1} > x_{2,1} > x_{1,2} > x_{3,1} > x_{2,2} > x_{1,3} > \dots > x_{n,n-1} > x_{n-1,n} > x_{n,n}.
    \end{equation}    
\end{defn}

Rhoades defined the shadow monomials as follows:
\begin{defn}
    Given a permutation $w \in \symm_n$, the {\em shadow monomial} of $w$, is the square-free monomial
    \begin{equation}
        \sss(w)=m(\SSS(w)).
    \end{equation}
\end{defn}

Rhoades then proved~\cite[Lemma 3.11]{rhoades2023increasing}

\begin{theorem}
    \label{thm:thm1}
    The monomials $\{\sss(w) : w \in \symm_n \}$ span the vector space $\CC[\xx]/I_n$.
\end{theorem}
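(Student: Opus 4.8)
The plan is to start from the spanning set $\{m(\RR) : \RR\text{ a rook placement}\}$ of $\CC[\xx]/I_n$ recorded above and to show that every $m(\RR)$ already lies in the $\CC$-span of $\{\sss(w):w\in\symm_n\}$ modulo $I_n$. Since every generator of $I_n$ is homogeneous and the linear generators $\sum_j x_{i,j}$ and $\sum_i x_{i,j}$ have degree $1$, the straightening moves below preserve the degree of $m(\RR)$, so it suffices to treat rook placements of each fixed size. The basic move is this: fix $(i,j)\in\RR$, put $\RR'=\RR\setminus\{(i,j)\}$, multiply the row relation $\sum_{j'}x_{i,j'}\in I_n$ by $m(\RR')$, and discard every term $x_{i,j'}m(\RR')$ whose column $j'$ already contains a point of $\RR'$ (such a term is divisible by a product of two variables in one column, hence lies in $I_n$). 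This gives
\[
m(\RR)\equiv -\sum_{j'} m\bigl(\RR'\cup\{(i,j')\}\bigr)\pmod{I_n},
\]
the sum being over the columns $j'\neq j$ that contain no point of $\RR$; the analogous manipulation with the column relation $\sum_i x_{i,j}$ expresses $m(\RR)$ through placements obtained by sliding $(i,j)$ within its column. When $|\RR|=n$ this sum is empty, so $m(\RR)\equiv 0$, which matches the absence of shadow monomials in degree $n$.

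For termination I would track the potential $\Phi(\RR)=\sum_{(i,j)\in\RR}(i+j)$, which is bounded above among placements of a given size. Call a column (resp.\ row) \emph{free for $\RR$} if it contains no point of $\RR$. In the Toeplitz variable order one has $x_{i,j}>_{\topl}x_{i,j'}$ exactly when $j<j'$, and $x_{i,j}>_{\topl}x_{i',j}$ exactly when $i<i'$. Hence applying the row move at a point $(i,j)$ all of whose free columns have index larger than $j$ replaces $m(\RR)$ by a $\CC$-linear combination of monomials $m(\RR'\cup\{(i,j')\})$, each of strictly larger $\Phi$ and each strictly smaller than $m(\RR)$ in the order $<_{\topl}$; the column move at a point all of whose free rows have index larger than $i$ behaves symmetrically. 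Call such a move \emph{admissible}. The procedure is now: if $|\RR|=n$, reduce to $0$; if $\RR$ is a shadow set, stop; otherwise apply an admissible move and iterate. Since $\Phi$ cannot grow without bound, this terminates, and the theorem follows provided the procedure can always be continued until only shadow monomials remain.

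Everything therefore comes down to the combinatorial lemma: \emph{if $\RR$ is a rook placement with $|\RR|<n$ that is not the shadow set of any permutation in $\symm_n$, then $\RR$ admits an admissible move} — that is, some point of $\RR$ has all its free columns of larger index, or some point has all its free rows of larger index. This is the heart of the argument. To prove it I would run the explicit recognition algorithm for shadow sets from Rhoades~\cite{rhoades2023increasing} on $\RR$: a rejection of $\RR$ pinpoints a point whose position is incompatible with being a northeast corner of a shadow line in any permutation consistent with the shadow data already forced by $\RR$, and one checks that this forces all the remaining free columns (respectively free rows) to lie past that point, which is exactly an admissible move. Conversely, one verifies that a shadow monomial $\sss(w)$ admits no admissible move, so that, together with the lemma, the placements with no admissible move are exactly the shadow sets. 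The surrounding algebra — the two displayed identities, the homogeneity and termination bookkeeping — is routine; isolating the admissible move from the failure of the recognition algorithm, and confirming the dichotomy with shadow sets, is where the real work lies. Finally, once spanning is established, the orbit-harmonics identity $\dim_\CC\CC[\xx]/I_n=|\symm_n|$ promotes $\{\sss(w)\}$ to a basis (in particular $w\mapsto\sss(w)$ is injective), and the degree formula $\deg\sss(w)=n-\lis(w)$ then reads off the Hilbert series $\sum_k a_{n,n-k}q^k$ — the refinements recorded immediately after this theorem.
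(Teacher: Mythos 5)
Your proposal reduces the theorem to the following combinatorial lemma, which you flag as the heart of the argument but leave unproved: \emph{if $\RR$ is a rook placement with $|\RR| < n$ that is not a shadow set, then some $(i,j) \in \RR$ has all free columns of index $> j$, or all free rows of index $> i$} (an ``admissible move''). This lemma is false, so the gap cannot be closed. Take $n = 3$ and $\RR = \{(2,2),(3,3)\}$. Since $|\SSS(w)| = n - \lis(w)$, the only $w \in \symm_3$ with $|\SSS(w)| = 2$ is $w = [3,2,1]$, and a direct computation gives $\SSS([3,2,1]) = \{(2,3),(3,2)\}$. Hence $\{(2,2),(3,3)\}$ is not a shadow set. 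But its only free row is $1$ and its only free column is $1$, so for the point $(2,2)$ we have $1 < 2$ (free column not larger) and $1 < 2$ (free row not larger), and likewise for $(3,3)$. No admissible move exists, and your procedure gets stuck at a non-shadow rook placement. (For a concrete stuck run: $\{(1,2),(3,3)\}$ admits an admissible column move at $(1,2)$, which produces exactly $\{(2,2),(3,3)\}$.)

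The underlying issue is that a one-step row or column slide can only express $m(\RR)$ in terms of Toeplitz-smaller monomials when the very restrictive admissibility condition holds, and there are non-shadow placements where it fails. The reduction $x_{2,2}x_{3,3} \equiv -x_{2,3}x_{3,2} \pmod{I_3}$ does hold, but every chain of single row/column moves realizing it must pass through Toeplitz-\emph{larger} monomials such as $x_{2,1}x_{3,3}$ along the way, so the lexicographic/potential bookkeeping you set up does not certify termination at shadow monomials. Knowing that a standard monomial basis exists abstractly tells you there is \emph{some} $f \in I_n$ with $\mathrm{in}_{\topl} f = m(\RR)$ for a non-shadow $\RR$, but it need not be a product of a single row or column relation with $m(\RR \setminus \{(i,j)\})$, which is all your moves generate. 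For what it is worth, the present paper does not reprove this statement either: it cites it as Lemma~3.11 of~\cite{rhoades2023increasing}, whose argument proceeds differently from the admissible-move scheme you outline.
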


Then, identifying $\symm_n$ as the group of $n \times n$ permutation matrices (i.e. matrices such that all entries are $0$ and $1$, with exactly one $1$ in each row and column), we have that the vanishing ideal of $\symm_n$ is generated by
\begin{itemize}
    \item $x_{i,j}^2-x_{i,j}$ for $1 \leq i,j \leq n$;
    \item $x_{i,j} \cdot x_{i,j'}$ for $1 \leq i \leq n$, $1 \leq j < j' \leq n$;
    \item $x_{i,j} \cdot x_{i',j}$ for $1 \leq j \leq n$, $1 \leq i < i' \leq n$;
    \item $x_{i,1}+x_{i,2}+\dots+x_{i,n}-1$ for $1 \leq i \leq n$;
    \item $x_{1,j}+x_{2,j}+\dots+x_{n,j}-1$ for $1 \leq j \leq n$.
\end{itemize}

A quick observation is that $I_n \subseteq \gr \II(\symm_n)$, since $I_n$ is generated by the highest degree homogeneous parts of the generators of $\II(\symm_n)$. The orbit harmonics~\eqref{eq:eqn2} gives
\begin{equation}
    \CC[\symm_n]  \cong \CC[\xx]/ \II(\symm_n) \cong \CC[\xx]/\gr \II(\symm_n).
\end{equation}
Using an argument on the dimensions of the vector spaces, Rhoades showed~\cite{rhoades2023increasing} that the ideals $I_n$ and $\gr \II(\symm_n)$ are actually the same, and showed that 
\begin{equation*}
    \{\sss(w) : w \in \symm_n \}
\end{equation*}  
is the standard monomial basis of $\CC[\xx]/I_n$ with respect to the Toeplitz order. Thus, let $a_{n,k}$ be the number of permutations $w$ in $\symm_{n}$ with $\lis(w) = k$, the Hilbert series of $\CC[\xx]/I_n$ is
\begin{equation}
    \Hilb( \CC[\xx] / I_n; q) = a_{n,n} + a_{n,n-1} \cdot q + a_{n,n-2} \cdot q^2  + \dots + a_{n,1} \cdot q^{n-1}.
\end{equation}

\subsection{Generalization to Signed Permutations}
\subsubsection{Spanning Set}
Given a signed permutation $w=(\sigma,\kappa)$, we define its diagram on the $n \times n$ grid. A point $(i,j)$ on the grid is colored green if $\sigma(i)=j$ and $\kappa(j)=1$. It is colored blue if $\sigma(i)=j$ and $\kappa(j)=0$. For example, when $n=6$ and the one-line notation of $w$ is $[2^1,5^0,3^0,1^0,6^0,4^1]$, the diagram is below on the left:
\begin{center}
    \begin{tikzpicture}[x=1.4em,y=1.4em, very thick,color = blue]
    \draw[step=1,black,thin] (0,0) grid (5,5);
    \filldraw [green] (0,1) circle (2pt);
    \filldraw [blue] (1,4) circle (2pt);
    \filldraw [blue] (2,2) circle (2pt);
    \filldraw [blue] (3,0) circle (2pt);
    \filldraw [blue] (4,5) circle (2pt);
    \filldraw [green] (5,3) circle (2pt);
    \end{tikzpicture} \quad \quad
    \begin{tikzpicture}[x=1.4em,y=1.4em, very thick,color = brown]
    \draw[step=1,black,thin] (0,0) grid (5,5);
    \filldraw [green] (0,1) circle (2pt);
    \filldraw [blue] (1,4) circle (2pt);
    \filldraw [blue] (2,2) circle (2pt);
    \filldraw [blue] (3,0) circle (2pt);
    \filldraw [blue] (4,5) circle (2pt);
    \filldraw [green] (5,3) circle (2pt);
    \draw(1,6)--(1,4)--(2,4)--(2,2)--(3,2)--(3,0)--(6,0);
    \draw(4,6)--(4,5)--(6,5);
    \filldraw [red] (2,4) circle (2pt);
    \filldraw [red] (3,2) circle (2pt);
    \end{tikzpicture}
    
\end{center}
We define the {\em negative set} of $w$ as 
\begin{equation}
    \nega(w)=\{(i,j):\sigma(i)=j \text{ and } \kappa(j)=1\ \}.
\end{equation} or equivalently, it's the collection of points colored red in the diagram. Similarly, we define the {\em positive set} of $w$  to be 
\begin{equation}
    \pos(w)=\{(i,j) : \sigma(i)=j \text{ and } \kappa(j)=0\}.
\end{equation}

\begin{remark}\label{rk:rk1}
    It is an immediate result that $\nega(w)$ is a rook placement, and for any rook placement $\RR$, there is $w \in B_n$ such that $\nega(w) = \RR$.
\end{remark}

Then, we do the shadow line construction on the positive set of $w$, and obtain the shadow set $\SSS(\pos(w))$, as depicted above on the right. The shadow lines are drawn in brown, and the points in the shadow set are labelled green. With this construction, we associate to the signed permutation $w$ a monomial 
\begin{equation}
    \sss(w) = m(\nega(w)) \cdot m(\SSS(\pos(w)))^2.
\end{equation} 
In the example above, the monomial is $x_{1,2}x_{6,4}x_{3,5}^2 x_{4,3}^2$, and we can introduce one of the main results:

\begin{theorem}\label{thm:thm2}
    The monomials $\{\sss(w) : w \in B_n \}$ form a spanning set of $\CC[\xx]/I_{B_n}$.
\end{theorem}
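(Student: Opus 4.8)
The plan is to show that the set $\{\sss(w) : w \in B_n\}$ both spans $\CC[\xx]/I_{B_n}$ and has the right cardinality structure relative to a known spanning set, so that a counting/dimension argument forces it to be a spanning set.

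First I would establish a convenient spanning set for $\CC[\xx]/I_{B_n}$ directly from the generators in Definition~\ref{defn:defn2}. Because $x_{i,j} x_{i,j'}$ and $x_{i,j} x_{i',j}$ lie in $I_{B_n}$, any monomial surviving modulo $I_{B_n}$ uses at most one variable from each row and at most one from each column; and because $x_{i,j}^3 \in I_{B_n}$ (the case $r=2$), each such variable appears to the first or second power. Hence $\CC[\xx]/I_{B_n}$ is spanned by the monomials $m(\RR_1) \cdot m(\RR_2)^2$ where $\RR_1, \RR_2$ are rook placements on disjoint sets of rows and columns — equivalently, by $\prod x_{i,j}^{\epsilon_{i,j}}$ over ``colored rook placements'' assigning each occupied cell an exponent in $\{1,2\}$. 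This is exactly the combinatorial shape of $\sss(w) = m(\nega(w)) \cdot m(\SSS(\pos(w)))^2$: by Remark~\ref{rk:rk1}, $\nega(w)$ ranges over all rook placements, and $\SSS(\pos(w))$ is a rook placement disjoint in rows/columns from $\pos(w) \supseteq$ nothing relevant — wait, I must check disjointness of $\nega(w)$ and $\SSS(\pos(w))$; since $\SSS(\pos(w))$ consists of northeast corners of shadow lines of $\pos(w)$, and $\pos(w), \nega(w)$ partition the support of $w$, I would verify that a shadow corner of the positive diagram never reuses a row or column occupied by $\nega(w)$ (this uses that shadow corners lie strictly northeast of positive points, combined with the one-point-per-row/column property of the full diagram of $w$).

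Next, the core of the argument: I would mimic Rhoades's proof of Theorem~\ref{thm:thm1}. The strategy there is to linearly reduce an arbitrary rook-placement monomial, using the row-sum and column-sum relations $\sum_j x_{i,j}^2 \in I_{B_n}$ and $\sum_i x_{i,j}^2 \in I_{B_n}$, into a $\CC$-linear combination of shadow monomials. Concretely, I would set up an induction on the Toeplitz order (or a suitable refinement handling the squared entries): given a colored rook placement that is not of the form $\sss(w)$ for any $w$, I would locate a ``bad'' squared variable $x_{i,j}^2$ whose cell $(i,j)$ is not a shadow corner of the positive part, use the appropriate row- or column-sum relation to rewrite $x_{i,j}^2$ as $-\sum_{j' \neq j} x_{i,j'}^2$ (then kill all terms that collide with an existing occupied row/column, by the product relations), and argue each surviving term is strictly smaller in the chosen order. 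The termination of this reduction, together with the fact that the shadow-set construction is precisely the fixed point of the ``push corners northeast'' process, is what identifies the irreducible monomials with the $\sss(w)$.

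The main obstacle I anticipate is handling the interaction between the linear (degree-one) part $m(\nega(w))$ and the quadratic part $m(\SSS(\pos(w)))^2$ simultaneously inside a single monomial order. In Rhoades's $\symm_n$ setting every occupied cell carries exponent $1$, so the Toeplitz lexicographic order directly controls the reduction; here a cell can be squared, and the row/column-sum relations only touch the squares, so I need an order that (i) is compatible with the product and cube relations as leading terms of a Gröbner basis, and (ii) makes the shadow-corner monomials precisely the standard monomials. I would likely need a two-tiered order — first comparing the ``support with multiplicity'' data, then breaking ties Toeplitz-style — and verify a Buchberger-style confluence, or instead sidestep Gröbner bases and run the reduction by hand as above, proving termination via a monovariant built from the total shadow-corner displacement. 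A secondary check is that no two distinct $w \in B_n$ give the same monomial $\sss(w)$ is \emph{not} needed for spanning, but I would note it anyway since it will matter for the later Hilbert-series statement; for the present theorem, spanning follows once the reduction lands every rook-placement monomial in $\mathrm{span}\{\sss(w)\}$.
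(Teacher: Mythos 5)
Your opening spanning set is exactly the paper's Lemma~\ref{lem:lem1}, and your disjointness check is correct and worth doing: a northeast corner of a shadow line built from $\pos(w)$ has $x$-coordinate equal to that of some positive point and $y$-coordinate equal to that of another, so $\SSS(\pos(w))$ occupies only rows and columns already used by $\pos(w)$, never those of $\nega(w)$.

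The real divergence is in the ``core'' step. You propose to re-run Rhoades's reduction from scratch on the mixed monomials $m(\RR_1)m(\RR_2)^2$, and you correctly anticipate the resulting headache: the degree-one part and the degree-two part do not coexist happily under a single monomial order, since the available row/column-sum relations $\sum_j x_{i,j}^2$ and $\sum_i x_{i,j}^2$ touch only squares. The paper sidesteps this entirely. It fixes $\RR_1$ once and for all (Remark~\ref{rk:rk1} guarantees some $w$ realizes it as $\nega(w)$), so the problem becomes: show that $\{m(\RR_1)m(\RR_2)^2\}$ modulo $I_{B_n}$ lies in the span of $\{\sss(w):\nega(w)=\RR_1\}$. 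With $I,J$ the row and column indices used by $\RR_1$, multiplying the relations of $I_{B_n}$ by the fixed factor $m(\RR_1)$ and restricting to $i \in [n]\setminus I$, $j \in [n]\setminus J$ yields precisely
\begin{itemize}
    \item $(x_{i,j}^2)^2 \equiv 0$, \quad $x_{i,j}^2 x_{i,j'}^2 \equiv 0$, \quad $x_{i,j}^2 x_{i',j}^2 \equiv 0$,
    \item $m(\RR_1)\sum_{j\in[n]\setminus J} x_{i,j}^2 \equiv 0$, \quad $m(\RR_1)\sum_{i\in[n]\setminus I} x_{i,j}^2 \equiv 0$,
\end{itemize}
which, viewing $x_{i,j}^2$ as a fresh variable, are \emph{exactly} the generators of $I_{n-|\RR_1|}$ on the $(n-|\RR_1|)\times(n-|\RR_1|)$ grid. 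Theorem~\ref{thm:thm1} can then be invoked as a black box to reduce $m(\RR_2)^2$ to a linear combination of $m(\SSS(\pi))^2$ over permutations $\pi$ of the complementary grid, and these are exactly the $\sss(w)$ with $\nega(w)=\RR_1$. No new two-tiered order, no Buchberger verification, no termination monovariant.

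Two smaller points. First, the ``counting/dimension argument'' you flag at the start is not needed for this theorem: spanning is a one-directional claim and the paper proves it directly. The dimension count appears only later, in Theorem~\ref{thm:thm3}, to upgrade the spanning set to a basis and establish $I_{B_n}=\gr\II(B_n)$. Second, if you did persist in the direct reduction route, you would essentially be reproving Rhoades's Theorem~\ref{thm:thm1} from scratch inside a harder ring; the proposal as written leaves the monomial order and confluence unresolved, so it is a sketch with a genuine gap rather than a proof.
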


Before presenting the proof of Theorem ~\ref{thm:thm2}, we first introduce the following lemma:

\begin{lemma}
\label{lem:lem1}
    The following set descends to a spanning set of $\CC[\xx]/I_{B_n}$:
    \begin{equation}
        \{m(\RR_1)m(\RR_2)^2: \RR_1,\RR_2 \in [n] \times [n],  \RR=\RR_1 \cup \RR_2 \text{ is a rook placement}  \}.
    \end{equation}
\end{lemma}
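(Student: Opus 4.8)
The plan is to reduce an arbitrary monomial in $\CC[\xx]$ modulo $I_{B_n}$ to a $\CC$-linear combination of monomials of the stated form. Start from the observation that the generators of $I_{B_n}$ (with $r=2$) immediately kill anything that is ``too large'': $x_{i,j}^3 \equiv 0$, and $x_{i,j} x_{i,j'} \equiv 0$, $x_{i,j} x_{i',j} \equiv 0$ whenever two variables share a row or a column. Consequently, modulo $I_{B_n}$ every monomial is congruent either to $0$ or to a monomial $\prod_{(i,j)\in S} x_{i,j}^{e_{i,j}}$ in which $S$ is a non-attacking rook placement and each exponent $e_{i,j}\in\{1,2\}$. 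Writing $\RR_2$ for the set of cells with exponent $2$ and $\RR_1$ for the set of cells with exponent $1$, such a monomial is exactly $m(\RR_1)m(\RR_2)^2$ with $\RR = \RR_1\cup\RR_2$ a rook placement. This shows the set in the statement, together with $0$, is already a spanning set before we even use the ``sum'' relations; the sum relations are not needed for Lemma~\ref{lem:lem1}, only later.

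The key steps, in order, are: (1) Note that $\{\,\text{monomials in }\CC[\xx]\,\}$ spans $\CC[\xx]$, hence their images span $\CC[\xx]/I_{B_n}$. (2) Show any monomial $\mathbf{x}^{\mathbf{a}}=\prod x_{i,j}^{a_{i,j}}$ with some $a_{i,j}\ge 3$ lies in $I_{B_n}$, since $x_{i,j}^3$ is a generator and $\mathbf{x}^{\mathbf{a}}$ is a multiple of it. (3) Show any monomial whose support contains two cells in a common row or common column lies in $I_{B_n}$, since $x_{i,j}x_{i,j'}$ (resp.\ $x_{i,j}x_{i',j}$) is a generator and again $\mathbf{x}^{\mathbf{a}}$ is a multiple of it. (4) Conclude that the surviving monomials are precisely those of the form $m(\RR_1)m(\RR_2)^2$ with $\RR_1\cup\RR_2$ a non-attacking rook placement, where $\RR_2$ collects the exponent-$2$ cells and $\RR_1$ the exponent-$1$ cells. (Here one may allow $\RR_1$ or $\RR_2$ to be empty, and implicitly $\RR_1\cap\RR_2=\varnothing$; one can also absorb the zero monomial into the span trivially.) Therefore every element of $\CC[\xx]/I_{B_n}$ is a $\CC$-linear combination of such monomials.

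There is essentially no hard obstacle here; the statement is a direct unwinding of the defining relations. The only point requiring a line of care is the bookkeeping in step (4): given a monomial supported on a rook placement $\RR$ with exponents in $\{1,2\}$, one must observe that the decomposition $\RR=\RR_1\sqcup\RR_2$ into exponent-$1$ and exponent-$2$ cells is forced, so the monomial genuinely has the claimed shape, and conversely every such $m(\RR_1)m(\RR_2)^2$ is a legitimate monomial in $\CC[\xx]$ (the condition $\RR_1\cup\RR_2$ rook placement is exactly what prevents it from being killed in steps (2)--(3)). Once Lemma~\ref{lem:lem1} is in hand, Theorem~\ref{thm:thm2} will follow by showing that for each rook placement $\RR$ one can, using the sum relations of $I_{B_n}$ together with Rhoades's original argument (Theorem~\ref{thm:thm1}) applied ``one color at a time,'' rewrite $m(\RR_1)m(\RR_2)^2$ in terms of the shadow monomials $\sss(w)$; but that is the content of the next proof, not this one.
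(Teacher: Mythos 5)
Your proof is correct and takes essentially the same approach as the paper's, which consists of a single sentence observing that the result is immediate from the cube and same-row/same-column product generators of $I_{B_n}$; you have simply spelled out the routine details (reduction of an arbitrary monomial, the disjointness of $\RR_1$ and $\RR_2$, and the irrelevance of the sum relations) that the paper leaves implicit.
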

\begin{proof}
    Since the ideal $I_{B_n}$ contains the cubes of all variables, as well as the products of any two variables in the same row or column, the result is immediate.
\end{proof}
With the lemma, we present the proof of Theorem~\ref{thm:thm2}.
\begin{proof}
    (of Theorem ~\ref{thm:thm2}) According to Lemma~\ref{lem:lem1}, we can fix a rook-placement $\RR_1$. By Remark~\ref{rk:rk1}, the set of $w \in B_n$ with $\nega(w)=\RR_1$ is nonempty, so it suffices to show 
    \begin{equation}
        \{m(\RR_1)m(\RR_2)^2: \RR=\RR_1 \cup \RR_2 \text{ is a rook placement} \}
    \end{equation} 
    lies, modulo the ideal $I_{B_n}$, in the linear span of
    \begin{equation}
        \{\sss(w) : w \in B_n,\nega(w)=\RR_1 \}.
    \end{equation}
    Let 
    \begin{equation}
        I = \{i: \text{there exists } j \text{ such that } (i,j) \in \RR_1 \},
    \end{equation} 
    and 
    \begin{equation}
        J = \{j: \text{there exists } i \text{ such that } (i,j) \in \RR_1 \}.
    \end{equation} 
    Fixing $i \in [n]\setminus I$,  since the sum of squares of all variables in the same row is in $I_{B_n}$, we have
    \begin{equation}
        \sum_{j \in [n] \setminus J}x_{i,j}^2 \equiv -\sum_{j \in J}x_{i,j}^2 \mod I_{B_n}.
    \end{equation}
    Similarly, fixing $j \in [n] \setminus J$, we also have the equation
    \begin{equation}
        \sum_{i \in [n] \setminus I}x_{i,j}^2 \equiv -\sum_{i \in I}x_{i,j}^2 \mod I_{B_n}.
    \end{equation}
    
    Thus, since $I_{B_n}$ contains the products of any two variables in the same row or column, we obtain the following relations:
    
    \begin{itemize}
        \item $m(\RR_1) \cdot \sum_{j \in [n] \setminus J} x_{i,j}^2 \equiv 0 \mod I_{B_n}$
        \item $m(\RR_1) \cdot \sum_{i \in [n] \setminus I} x_{i,j}^2 \equiv 0 \mod I_{B_n}$
    \end{itemize}
    We also have the following relations directly from the definition of $I_{B_n}$:
    
    \begin{itemize}
        \item $(x_{i,j}^2)^2 \equiv 0 \mod I_{B_n}$ for $i \in [n] \setminus I$, $j \in [n] \setminus J$;
        \item $x_{i,j}^2 \cdot x_{i,j'}^2 \equiv 0 \mod I_{B_n}$ for $i \in [n] \setminus I$, $j,j' \in [n] \setminus J$;
        \item $x_{i,j}^2 \cdot x_{i',j}^2 \equiv 0 \mod I_{B_n}$ for $i,i' \in [n] \setminus I$, $j \in [n] \setminus J$.
    \end{itemize}
    
    Then, if we look at the $(n - |\RR_1|) \times (n - |\RR_1|)$ matrix of variables, where each variable is $x_{i,j}^2$ for $i \in [n] \setminus I, j \in [n] \setminus J$, the collection of relations are exactly the ones we used to define $I_n$ in Definition~\ref{defn:defn1}. For a signed permutation $w$ with $\nega(w)=\RR_1$, if we delete the rows and columns that $\RR_1$ appears in, $\pos(w)$ will form a permutation in the remaining $(n - |I|) \times (n - |J|)$ grid. And since 
    \begin{equation}
        \{\pos(w):w \in B_n,\nega(w)=\RR_1\}
    \end{equation}
    ranges over all possible permutations in the grid labelled by $([n] \setminus I) \times ([n] \setminus J)$, according to Theorem~\ref{thm:thm1}, the proof is complete.
\end{proof}

\subsubsection{Standard Monomial Basis}
Viewing $B_n$ as $n \times n$ matrices with exactly one nonzero entry in each row or column, where the nonzero entries can be $\pm 1$, we have that the following polynomials all vanish on $B_n$: 
\begin{itemize}
    \item $x_{i,j}^3-x_{i,j}$ for $1 \leq i,j \leq n$;
    \item $x_{i,j} \cdot x_{i,j'}$ for $1 \leq i \leq n$, $1 \leq j < j' \leq n$;
    \item $x_{i,j} \cdot x_{i',j}$ for $1 \leq j \leq n$, $1 \leq i < i' \leq n$;
    \item $x_{i,1}^2+x_{i,2}^2+\dots+x_{i,n}^2-1$ for $1 \leq i \leq n$;
    \item $x_{1,j}^2+x_{2,j}^2+\dots+x_{n,j}^2-1$ for $1 \leq j \leq n$.
\end{itemize}
Let $J$ be the ideal generated by these polynomials, it follows immediately that $J \subseteq \II(B_n)$. Comparing these polynomials with the generators of $I_{B_n}$, it follows that $I_{B_n} \subseteq \gr J \subseteq \gr \II(B_n)$. We further improve this containment to an equality.

\begin{theorem}\label{thm:thm3}
    We have the equalities of ideals $I_{B_n}=\gr \II(B_n)$ and $J= \II(B_n)$. Moreover, the collection of monomials $\{\sss(w) : w \in B_n\}$ descends to a basis of the vector space $\CC[\xx]/I_{B_n}$. This is the standard monomial basis with respect to the Toeplitz order.
\end{theorem}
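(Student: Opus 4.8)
The plan is to imitate Rhoades's dimension argument for $\symm_n$ and then analyze the initial ideal directly. First I would assemble a dimension squeeze. Orbit harmonics (Equation~\eqref{eq:eqn2}) gives $\CC[\xx]/\gr\II(B_n)\cong\CC[B_n]$, so $\dim_\CC\CC[\xx]/\gr\II(B_n)=|B_n|=2^n n!$. The containment $I_{B_n}\subseteq\gr J\subseteq\gr\II(B_n)$ recorded just before the theorem yields surjections $\CC[\xx]/I_{B_n}\twoheadrightarrow\CC[\xx]/\gr J\twoheadrightarrow\CC[\xx]/\gr\II(B_n)$, so $\dim_\CC\CC[\xx]/I_{B_n}\ge|B_n|$. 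On the other hand, Theorem~\ref{thm:thm2} shows that $\{\sss(w):w\in B_n\}$ spans $\CC[\xx]/I_{B_n}$, and this set has at most $|B_n|$ elements, so $\dim_\CC\CC[\xx]/I_{B_n}\le|B_n|$. Hence every dimension above equals $|B_n|$. Using the elementary fact that nested ideals whose quotients have equal finite dimension must coincide — together with the standard identity $\dim_\CC\CC[\xx]/K=\dim_\CC\CC[\xx]/\gr K$ — the inclusion $I_{B_n}\subseteq\gr\II(B_n)$ becomes the equality $I_{B_n}=\gr\II(B_n)$; then $\gr J=\gr\II(B_n)$ together with $J\subseteq\II(B_n)$ forces $J=\II(B_n)$. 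Also, a spanning set of size at most $|B_n|$ in a space of dimension $|B_n|$ must be a basis of exactly $|B_n|$ elements, so $\{\sss(w):w\in B_n\}$ is a $\CC$-basis of $\CC[\xx]/I_{B_n}$ and $w\mapsto\sss(w)$ is injective.

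It then remains to identify this basis with the standard monomial basis for $<_{\topl}$. Since we have two monomial $\CC$-bases of $\CC[\xx]/I_{B_n}$ of the same finite cardinality, it suffices to show that every monomial not of the form $\sss(w)$ lies in $\mathrm{in}_{<_{\topl}}I_{B_n}$; and since any monomial divisible by some $x_{i,j}^3$ or by a product of two variables in a common row or column already lies in $I_{B_n}$, I only need to treat a monomial $m$ whose support $\RR$ is a non-attacking rook placement and all of whose exponents are $1$ or $2$, say $m=m(\RR)\cdot m(\RR^{(2)})$ with $\RR^{(2)}\subseteq\RR$ the set of squared positions, and which is not a shadow monomial. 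For such $m$ I would produce the needed element of $I_{B_n}$ from a row relation $\sum_j x_{i_0,j}^2$ (or a column relation) by multiplying it by $m/x_{i_0,j_0}^2$ for a carefully chosen $(i_0,j_0)\in\RR^{(2)}$: every resulting term containing two variables in a common row or column lies in $I_{B_n}$ and may be discarded, leaving $m$ plus a sum of monomials obtained from $m$ by sliding one squared variable within its row to a column not used by $\RR$. Because moving a variable to a later column strictly decreases it in the Toeplitz order, choosing $(i_0,j_0)$ so that every free column lies strictly to the right of $j_0$ makes $m$ the $<_{\topl}$-leading term, whence $m\in\mathrm{in}_{<_{\topl}}I_{B_n}$; a column-relation version handles the remaining cases. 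Once all non-shadow monomials have been disposed of in this way, the cardinality count from the first paragraph forces $\{\sss(w):w\in B_n\}$ to be exactly the set of standard monomials, so no separate check that shadow monomials are standard is required.

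The hard part will be the combinatorial bookkeeping behind the pivot choice, i.e. showing that a suitable $(i_0,j_0)$ exists (via rows or columns) precisely when $m$ is not a shadow monomial. Here I would fix the square-free part of $m$: by Remark~\ref{rk:rk1} the exponent-$1$ positions are $\nega(w)$ for a signed permutation $w$, and — as in the proof of Theorem~\ref{thm:thm2} — the exponent-$2$ positions must form the Viennot shadow set of a genuine permutation on the sub-grid of rows and columns avoided by $\nega(w)$, where Rhoades's refinement of Theorem~\ref{thm:thm1} already describes the standard monomials of $I_{n-|\nega(w)|}$. The delicate point is that the Toeplitz order of $\CC[\xx]$, restricted to the squared variables over that sub-grid, is \emph{not} literally the Toeplitz order of the sub-grid; so I must check that the only features of $<_{\topl}$ actually used — which variable is largest in each row and column, and the relative order of the leading terms of (products of) the relations $\sum_j x_{i,j}^2$ and $\sum_i x_{i,j}^2$ — still behave as in the symmetric-group case, and in particular that a Viennot shadow set always avoids the first row and first column of its ambient sub-grid, so that its squared variables never collide with those leading terms. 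Reconciling this block structure with the global order, and extending Rhoades's $\symm_n$ straightening analysis to the mixed square-free-and-squared setting, is the crux; with it in hand, the dimension count from the first paragraph closes the proof.
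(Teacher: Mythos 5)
Your first paragraph recovers the paper's dimension argument essentially verbatim: orbit harmonics gives $\dim \CC[\xx]/\gr\II(B_n)=|B_n|$, Theorem~\ref{thm:thm2} caps $\dim\CC[\xx]/I_{B_n}$ at $|B_n|$, and the squeeze $I_{B_n}\subseteq\gr J\subseteq\gr\II(B_n)$ forces equalities and makes $\{\sss(w)\}$ a basis. Your inference $J=\II(B_n)$ from $\gr J=\gr\II(B_n)$ and $J\subseteq\II(B_n)$ is a correct (and slightly more explicit) version of what the paper leaves implicit. Up to this point the approaches coincide.

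The identification of $\{\sss(w)\}$ with the standard monomial basis is where you part ways with the paper, and where your proposal stops short of a proof. The paper does not run a straightening algorithm: it fixes the square-free part $m(\RR_1)$ of a standard monomial $f$, passes to the $(n-|\RR_1|)\times(n-|\RR_1|)$ sub-grid in the squared variables $x_{i,j}^2$ with $i\notin I,\,j\notin J$, and invokes Rhoades's Theorem 3.12 on $I_{n-|\RR_1|}$ to deduce that $f/m(\RR_1)$ must be a shadow monomial of that sub-grid; cardinality then closes the argument. You instead propose to show each non-shadow monomial $m$ is the $<_{\topl}$-leading term of an explicit element obtained by multiplying one row (or column) sum of squares by $m/x_{i_0,j_0}^2$. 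You are right that such a step strictly decreases the slid terms, and your observation that the Toeplitz order on the sub-grid is \emph{not} the restriction of the ambient Toeplitz order is genuine and worth recording (for instance with rows $\{1,3,4\}$ and columns $\{1,2,4\}$ inside a $4\times4$ grid, $x_{1,2}>x_{3,1}$ globally but the relabeled sub-grid order reverses this). But your proposal does not actually resolve the issue; it ends by declaring the reconciliation "the crux" and deferring it. In particular, you never establish that a single row or column move suffices to exhibit every non-shadow monomial as a leading term: some non-shadow monomials have no free column at all (e.g. $x_{1,1}^2 x_{2,2}^2 x_{3,3}^2$ for $n=3$), in which case the relevant point is that $m$ lies in $I_{B_n}$ itself rather than being produced by a slide, and for others a single pivot may generate terms on both sides of $m$, requiring an inductive straightening or a Gr\"obner-basis style argument. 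Absent that analysis, the claim "every monomial not of the form $\sss(w)$ lies in $\mathrm{in}_{<_{\topl}}I_{B_n}$" is asserted rather than proved. The paper sidesteps all of this by outsourcing to Rhoades's $\symm_n$ result, so to match it you would either need to carry out the full straightening, or verify that the restricted order and the sub-grid Toeplitz order yield the same initial ideal for $I_{n-|\RR_1|}$.
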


\begin{proof}
    Applying orbit harmonics, we have the following sequence of isomorphisms of (ungraded) vectors spaces
    \begin{equation}
    \CC[B_n] \cong \CC[\xx]/ \II(B_n) \cong \CC[\xx]/\gr \II(B_n). 
    \end{equation}
    With $I_{B_n} \subseteq \gr J \subseteq \gr \II(B_n)$, we have 
    \begin{equation}\label{eq:eqn3}
        2^n n! = \dim (\CC[\xx]/\gr \II(B_n)) \leq \dim (\CC[\xx]/\gr J) \leq \dim(\CC[\xx]/I_{B_n}).
    \end{equation}
    Since there is a unique pair $(\nega(w),\SSS(\pos(w))$ for each signed permutation $w$, the spanning set $\{\sss(w) : w \in B_n\}$ in Theorem~\ref{thm:thm2} has size $2^n n!$. This forces all inequalities in ~\eqref{eq:eqn3} to be equalities. This proves the equalities of ideals, and hence proves that $\{\sss(w) : w \in B_n\}$ descends to a basis of $\CC[\xx]/I_{B_n}$. 

    To prove that this is actually the standard monomial basis with respect to the Toeplitz order, let $\RR_1 $ be any rook placement, and let $f$ be a monomial with square-free part $m(\RR_1)$. We claim that if $f$ is in the standard monomial basis then $f$ is of the form $\sss(w)$ for some $w \in B_n$. To see this, recall that $f$ being in the standard monomial basis means that $f$ is not the initial monomial of any polynomial in $I_{B_n}$. Then, using the notation in the end of the proof of Theorem~\ref{thm:thm2}, the perfect square part $f/m(\RR_1)$ of $f$ must not be the initial monomial of any polynomial in $I_{n- |\RR_1|}$, where the variables are $x_{i,j}^2$ with $i \in [n] \setminus I$ and $j \in [n] \setminus J$. Then, according to the result on the standard monomial basis of $I_n$ by Rhoades ~\cite[Theorem 3.12]{rhoades2023increasing}, the claim is proved; and since the vector space $\CC[\xx]/I_{B_n}$ has dimension $2^n n!$, all monomials of the form $\sss(w)$ for $w \in B_n$ must be in the standard monomial basis.
\end{proof}
With the following two definitions, we will be able to find the Hilbert series of $\CC[\xx]/I_{B_n}$.

\begin{defn}\label{defn:defn4}
    Given $w \in B_n$, we define $\lis(\pos(w))$ to be the size of the largest subset $L$ of $\pos(w)$ such that given $(i,j)$ and $(i',j')$ in $L$, $i<i'$ implies $j<j'$. 
\end{defn}

\begin{example}
    Let $w \in B_7$ with one-line notation $w = [3^0 , 1^1 , 6^0, 4^0, 7^0 , 2^1 , 5^1]$, we have $\pos(w) = \{ (1,3), (3,6), (4,4), (5,7) \}$, and $\lis(\pos(w)) = 3$.
\end{example}
\vspace{0.1in}
\begin{defn}~\label{defn:b_n,k}
    For any pair of integers $n,k$, we define 
    \begin{equation}
        b_{n,k}=|\{ w \in B_n: 2 \lis(\pos(w)) + |\nega(w)| = k \}|
    \end{equation}
\end{defn}

From the definition, it is immediate that $b_{n,1}=0$ for $n \geq 2$, as $|\nega(w)| \geq 2$ or $\lis(\pos(w)) \geq 1$. Then, we have the following corollary.
\vspace{0.1in}
\begin{corollary}~\label{cor:hilbert}
    We have the Hilbert series of $\CC[\xx]/ I_{B_n}$
    \begin{equation}
        \Hilb( \CC[\xx]/ I_{B_n}; q) = b_{n,2n} + b_{n,2n-1} \cdot q + \cdots + b_{n,1} \cdot q^{2n-1}.
    \end{equation}
    
\end{corollary}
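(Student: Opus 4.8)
The plan is to read the Hilbert series directly off the standard monomial basis provided by Theorem~\ref{thm:thm3}, reducing everything to a degree computation. Since $\{\sss(w) : w \in B_n\}$ descends to a basis of $\CC[\xx]/I_{B_n}$ and each $\sss(w)$ is a monomial, hence homogeneous, the coefficient of $q^d$ in $\Hilb(\CC[\xx]/I_{B_n}; q)$ is precisely the number of $w \in B_n$ with $\deg \sss(w) = d$ (distinct $w$ giving distinct basis monomials, as the pair $(\nega(w), \SSS(\pos(w)))$ determines $w$). So it remains only to express $\deg \sss(w)$ in terms of the statistics $\lis(\pos(w))$ and $|\nega(w)|$.

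First I would compute $\deg \sss(w)$ from $\sss(w) = m(\nega(w)) \cdot m(\SSS(\pos(w)))^2$: since $m(S)$ is square-free of degree $|S|$, we get $\deg \sss(w) = |\nega(w)| + 2\,|\SSS(\pos(w))|$. Next, using the sets $I, J$ of occupied rows and columns of $\nega(w)$ from the proof of Theorem~\ref{thm:thm2}: deleting these $|\nega(w)|$ rows and columns turns the diagram of $\pos(w)$ into the diagram of an honest permutation on the $(n - |\nega(w)|) \times (n - |\nega(w)|)$ grid indexed by $([n]\setminus I) \times ([n]\setminus J)$. Applying Lemma~\ref{lem:size} to that permutation yields $|\SSS(\pos(w))| = (n - |\nega(w)|) - \lis(\pos(w))$, where $\lis(\pos(w))$ is exactly the quantity of Definition~\ref{defn:defn4}; the identification is immediate because the deleted rows and columns contain no points of $\pos(w)$, so the deletion preserves the relative order of all remaining points and hence the length of the longest increasing chain among them.

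Combining the two displays gives
\begin{equation}
    \deg \sss(w) = |\nega(w)| + 2\big( n - |\nega(w)|\big) - 2\lis(\pos(w)) = 2n - \big( 2\lis(\pos(w)) + |\nega(w)| \big),
\end{equation}
so $\deg \sss(w) = 2n - k$ exactly when $2\lis(\pos(w)) + |\nega(w)| = k$, and the number of such $w$ is $b_{n,k}$ by Definition~\ref{defn:b_n,k}. Summing over all basis elements, with $d = 2n - k$, yields $\Hilb(\CC[\xx]/I_{B_n}; q) = \sum_{k} b_{n,k}\, q^{2n-k} = b_{n,2n} + b_{n,2n-1}\cdot q + \cdots + b_{n,1}\cdot q^{2n-1}$, as claimed. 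There is no serious obstacle here: the corollary is essentially a bookkeeping consequence of Theorem~\ref{thm:thm3} and Lemma~\ref{lem:size}, and the only point needing a moment's care is the identification of $\lis(\pos(w))$ with the longest-increasing-subsequence length of the reduced permutation, addressed above.
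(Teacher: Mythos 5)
Your proposal is correct and follows essentially the same route as the paper: apply Theorem~\ref{thm:thm3} to read the Hilbert series off the basis $\{\sss(w)\}$, and use the degree identity $\deg\sss(w) = 2|\SSS(\pos(w))| + |\nega(w)| = 2(|\pos(w)| - \lis(\pos(w))) + |\nega(w)| = 2n - 2\lis(\pos(w)) - |\nega(w)|$ to match degrees with the statistic defining $b_{n,k}$. The only difference is that you spell out more carefully why $|\SSS(\pos(w))| = |\pos(w)| - \lis(\pos(w))$ by reducing to the smaller grid and invoking Lemma~\ref{lem:size} there, a point the paper passes over silently.
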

\begin{proof}
    For $w \in B_n$, 
    \begin{align*}
        \deg \sss(w) &= 2 |\SSS(\pos(w))|+|\nega(w)| \\
        &= 2 (|\pos(w)|-\lis(\pos(w)))+|\nega(w)| \\
        &= 2n - |\nega(w)| - 2 \lis(\pos(w)).
    \end{align*}
    Thus, $|\{w \in B_n : \deg \sss(w) = k\}| = b_{n,2n-k}$ and Theorem~\ref{thm:thm3} completes the proof.
\end{proof}

\begin{example}
    When $n=3$, the Hilbert series of $\CC[\xx]/I_{B_3}$ is
    \begin{equation}
        \Hilb( \CC[\xx]/I_{B_3}; q) = 1 + 9q + 22q^2 + 9q^3 + q^4. 
    \end{equation}
\end{example}

\subsection{Generalization to $\symm_{n,r}$}
Given an $r$-colored permutation $w = (\sigma,\kappa)$, we also draw it on the $n \times n$ grid. A point $(i,j)$ is labelled in color $c_l$ if $\sigma(i)=j$ and $\kappa(j)=l$, where $l$ ranges over $0,1,\dots,r-1$. We define the sets $C_l(w)$ to be the collection of points colored in $c_l$. Then, we do the shadow set construction on the set $C_0(w)$ and obtain the shadow set $\SSS(C_0(w))$, and we associate a monomial $\sss(w)$ to $w$, defined as
\begin{equation}
    \sss(w) = m(C_1(w)) \cdot m(C_2(w))^2 \cdot \dots \cdot m(C_{r-1}(w))^{r-1} \cdot m(\SSS(C_0(w)))^r.
\end{equation}
With this construction, we give the following theorem:
\begin{theorem}\label{thm:thm4}
    Let $I_{\symm_{n,r}}$ be defined as in Definition~\ref{defn:defn2}. The monomials $\{ \sss(w) : w \in \symm_{n,r} \}$ descends to a spanning set of $\CC[\xx]/I_{\symm_{n,r}}$.
\end{theorem}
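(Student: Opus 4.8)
The plan is to run the argument proving Theorem~\ref{thm:thm2} with the two-layer decomposition $(\nega(w),\pos(w))$ of a signed permutation replaced by the $r$-layer decomposition $(C_0(w),C_1(w),\dots,C_{r-1}(w))$ of an $r$-colored permutation. First I would record the $r$-colored analogue of Lemma~\ref{lem:lem1}: since $I_{\symm_{n,r}}$ contains $x_{i,j}^{r+1}$ and the product of any two distinct variables in a common row or column, every monomial is congruent modulo $I_{\symm_{n,r}}$ to one whose support is a rook placement and all of whose exponents lie in $\{1,\dots,r\}$. Sorting the support points by their exponent then shows that
\[
\{\, m(\RR_1)\,m(\RR_2)^2\cdots m(\RR_{r-1})^{r-1}\,m(\RR_r)^r \ :\ \RR_1\sqcup\RR_2\sqcup\cdots\sqcup\RR_r \text{ is a rook placement}\,\}
\]
descends to a spanning set of $\CC[\xx]/I_{\symm_{n,r}}$, where the $\RR_l$ are pairwise disjoint (possibly empty) subsets of $[n]\times[n]$.

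Next I would fix the ``colored'' layers $\RR_1,\dots,\RR_{r-1}$, let $I$ and $J$ be the sets of rows and columns they occupy, and set $n' = n-|I| = n-|J|$ (the two cardinalities agree because the union is a rook placement). Attention then restricts to the complementary $n'\times n'$ submatrix indexed by $([n]\setminus I)\times([n]\setminus J)$, on which the last layer $\RR_r$ is automatically supported. Writing $y_{i,j}:=x_{i,j}^{r}$ for $(i,j)$ in this submatrix, I would check that modulo $I_{\symm_{n,r}}$ the $y_{i,j}$ obey exactly the relations defining the ideal $I_{n'}$ of Definition~\ref{defn:defn1}: $y_{i,j}^2\equiv 0$ because $x_{i,j}^{2r}\in I_{\symm_{n,r}}$ (as $2r\ge r+1$); $y_{i,j}y_{i,j'}\equiv 0$ and $y_{i,j}y_{i',j}\equiv 0$ from the same-row and same-column product generators; and, after multiplication by the prefactor $P:=m(\RR_1)m(\RR_2)^2\cdots m(\RR_{r-1})^{r-1}$, the vanishing of the row sum $\sum_{j\notin J}y_{i,j}$ and column sum $\sum_{i\notin I}y_{i,j}$ deduced from the power-sum generators $\sum_{j}x_{i,j}^{r}$ and $\sum_i x_{i,j}^{r}$. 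The last point is the crux and is where I expect the real (if routine) work to lie: from $\sum_{j}x_{i,j}^{r}\in I_{\symm_{n,r}}$ one gets $\sum_{j\notin J}x_{i,j}^{r}\equiv-\sum_{j\in J}x_{i,j}^{r}$, and for each $j\in J$ the prefactor $P$ is divisible by $x_{i',j}$, where $(i',j)$ is the unique point of $\RR_1\cup\cdots\cup\RR_{r-1}$ lying in column $j$; since $i\in[n]\setminus I$ while $i'\in I$, the product $P\,x_{i,j}^{r}$ is divisible by the genuine same-column generator $x_{i',j}x_{i,j}$, so $P\sum_{j\notin J}x_{i,j}^{r}\in I_{\symm_{n,r}}$, and symmetrically for columns. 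This is precisely the $\symm_{n,r}$-analogue of the identity $m(\RR_1)\sum_{j\in[n]\setminus J}x_{i,j}^2\equiv 0$ used in the proof of Theorem~\ref{thm:thm2}.

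Granting this, multiplication by $P$ composed with the substitution $y_{i,j}\mapsto x_{i,j}^{r}$ gives a well-defined $\CC$-linear map from the polynomial ring in the $y_{i,j}$ modulo $I_{n'}$ to $\CC[\xx]/I_{\symm_{n,r}}$, whose image contains every $P\cdot m(\RR_r)^r$ for $\RR_r$ a rook placement in the submatrix. By Theorem~\ref{thm:thm1} (with $n$ replaced by $n'$), the classes of $P\cdot m(\SSS(v))^r$, as $v$ ranges over permutations of that submatrix, span the image. Finally, by the $r$-colored analogue of Remark~\ref{rk:rk1} — given disjoint rook placements $\RR_1,\dots,\RR_{r-1}$ and any permutation matrix $v$ of the complementary submatrix there is $w\in\symm_{n,r}$ with $C_l(w)=\RR_l$ for $1\le l\le r-1$ and $C_0(w)$ equal to the graph of $v$ — and for such $w$ one has $P\cdot m(\SSS(C_0(w)))^r=\sss(w)$. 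Hence $\{\sss(w):w\in\symm_{n,r},\ C_l(w)=\RR_l \text{ for }1\le l\le r-1\}$ spans the span of the corresponding piece of the set displayed above; summing over all admissible tuples $\RR_1,\dots,\RR_{r-1}$ yields that $\{\sss(w):w\in\symm_{n,r}\}$ descends to a spanning set of $\CC[\xx]/I_{\symm_{n,r}}$. (For $r=1$ the prefactor is empty and the claim degenerates to Theorem~\ref{thm:thm1}.)
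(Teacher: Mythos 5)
Your proof is correct and follows essentially the same approach as the paper: you first establish the $r$-colored analogue of Lemma~\ref{lem:lem1} (this is precisely Lemma~\ref{lem:spanning}), then fix $\RR_1,\dots,\RR_{r-1}$, pass to the complementary submatrix, observe that the $r$-th powers $x_{i,j}^r$ there satisfy the defining relations of $I_{n'}$ after multiplying by the prefactor, and conclude via Theorem~\ref{thm:thm1} together with the $r$-colored analogue of Remark~\ref{rk:rk1}. Your write-up is slightly more explicit than the paper's at two points (why the prefactor kills $\sum_{j\in J}x_{i,j}^r$, and the well-definedness of the substitution-plus-multiplication map), but the key lemma, the layer decomposition $(C_0,\dots,C_{r-1})$, and the reduction to $I_{n'}$ all coincide with the paper's argument.
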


The proof of Theorem~\ref{thm:thm4} is similar to Theorem~\ref{thm:thm2}. We need the following lemma, which is similar to Lemma~\ref{lem:lem1}.

\begin{lemma}\label{lem:spanning}
    The following set of monomials descends to a spanning set of $\CC[\xx]/I_{\symm_{n,r}}$:
    \begin{equation}~\label{eq:lem_r}
        \left\{ \prod_{l=1}^r m(\RR_l)^l : \RR_l \subset [n] \times [n], \RR = \bigcup_{l=1}^r \RR_l \text{ is a rook placement} \right\}
    \end{equation}
\end{lemma}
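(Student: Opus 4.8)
The plan is to mimic the proof of Lemma~\ref{lem:lem1}, using only the monomial generators of $I_{\symm_{n,r}}$ and never the two ``row/column sum of $r$-th powers'' relations. First I would take an arbitrary monomial $m = \prod_{i,j} x_{i,j}^{e_{i,j}} \in \CC[\xx]$ together with its support $\mathrm{supp}(m) = \{(i,j) : e_{i,j} > 0\}$. If $\mathrm{supp}(m)$ contains two distinct cells $(i,j), (i,j')$ in a common row (resp.\ two cells $(i,j),(i',j)$ in a common column), then $m$ is divisible by the generator $x_{i,j}x_{i,j'}$ (resp.\ $x_{i,j}x_{i',j}$) of $I_{\symm_{n,r}}$, hence $m \equiv 0 \pmod{I_{\symm_{n,r}}}$. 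So in any $\CC$-linear combination we may discard all such monomials and assume $\mathrm{supp}(m)$ is a non-attacking rook placement $\RR$.

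Next I would reduce the exponents. If some $e_{i,j} \geq r+1$, then $x_{i,j}^{r+1} \mid m$, and since $x_{i,j}^{r+1}$ is a generator of $I_{\symm_{n,r}}$ we again get $m \equiv 0 \pmod{I_{\symm_{n,r}}}$. Hence we may further assume $1 \leq e_{i,j} \leq r$ for every $(i,j) \in \RR$. For such a monomial, put $\RR_l := \{(i,j) \in \RR : e_{i,j} = l\}$ for $1 \leq l \leq r$. The sets $\RR_1, \dots, \RR_r$ are pairwise disjoint with $\bigcup_{l=1}^r \RR_l = \RR$, which is a rook placement, and
\[
m = \prod_{(i,j) \in \RR} x_{i,j}^{e_{i,j}} = \prod_{l=1}^r \prod_{(i,j) \in \RR_l} x_{i,j}^l = \prod_{l=1}^r m(\RR_l)^l .
\]
Thus every monomial of $\CC[\xx]$ is congruent modulo $I_{\symm_{n,r}}$ either to $0$ or to a monomial of the form appearing in~\eqref{eq:lem_r}, so the set~\eqref{eq:lem_r} spans $\CC[\xx]/I_{\symm_{n,r}}$.

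There is essentially no obstacle here: the argument is a direct bookkeeping reduction, and the specialization $r = 2$ recovers Lemma~\ref{lem:lem1} verbatim. The one point worth flagging is that this lemma uses only the $(r+1)$-th power generators together with the two families of degree-two ``same row / same column'' generators; the relations $\sum_j x_{i,j}^r$ and $\sum_i x_{i,j}^r$ play no role in producing this (still rather large) spanning set, and will instead be the tools used in the proof of Theorem~\ref{thm:thm4} to collapse it down to the shadow monomials $\{\sss(w) : w \in \symm_{n,r}\}$, exactly as the row and column sums were used in the proof of Theorem~\ref{thm:thm2}.
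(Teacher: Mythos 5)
Your proof is correct and takes the same approach as the paper, which simply asserts the spanning as ``immediate'' from the presence of the $(r+1)$-th powers and the same-row/same-column products among the generators of $I_{\symm_{n,r}}$; you have just spelled out the straightforward reduction that makes it immediate. Your closing remark that the row and column power-sum relations are reserved for collapsing this spanning set down to $\{\sss(w): w \in \symm_{n,r}\}$ in Theorem~\ref{thm:thm4} is also accurate.
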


\begin{proof}
    Since the ideal $I_{\symm_{n,r}}$ contains the $r+1$-th power of all variables, as well as the products of any two variables in the same row or column, the result is immediate.
\end{proof}

With Lemma~\ref{lem:spanning}, we give the proof of Theorem~\ref{thm:thm4}.
\begin{proof}
    (of Theorem~\ref{thm:thm4}) According to Lemma~\ref{lem:spanning}, we can fix a collection of rook placements $\RR_1,\RR_2, \dots , \RR_{r-1}$ such that
    \begin{equation}
        \bigcup_{l=1}^{r-1} \RR_l
    \end{equation}
    is also a rook placement. Let $f$ be the monomial
    \begin{equation}
        f = \prod_{l=1}^{r-1} m(\RR_l)^l,
    \end{equation}
    it suffices to show 
    \begin{equation}
        \{f \cdot m(\RR_r)^r: \bigcup_{l=1}^r \RR_l  \text{ is a rook placement} \}
    \end{equation} 
    lies, modulo $I_{\symm_{n,r}}$, in the linear span of 
    \begin{equation}
        \{\sss(w) : w \in \symm_{n,r}, C_l(w) = \RR_l \text{ for } 1 \leq l \leq r-1 \}.
    \end{equation}
    Let 
    \begin{equation}
        I = \{i: \text{there exists } j \in [n], 1 \leq l \leq r-1 \text{ such that } (i,j) \in \RR_l \},
    \end{equation}
    and 
    \begin{equation}
        J = \{j: \text{there exists } i \in [n],1 \leq l \leq r-1 \text{ such that } (i,j) \in \RR_l \}.
    \end{equation} 
    Note that we have 
    \begin{equation}
        |I|=|J|=|\bigcup_{l=1}^{r-1}\RR_l|.
    \end{equation} 
    Fixing $i \in [n]\setminus I$, since the sum of $r$-th powers of all variables in the same row is in $I_{\symm_{n,r}}$, we have
    \begin{equation}
        \sum_{j \in [n] \setminus J}x_{i,j}^r \equiv -\sum_{j \in J}x_{i,j}^r \mod I_{\symm_{n,r}}.
    \end{equation}
    Similarly, fixing $j \in [n] \setminus J$, we also have the equation
    \begin{equation}
        \sum_{i \in [n] \setminus I}x_{i,j}^r \equiv -\sum_{i \in I}x_{i,j}^r \mod I_{\symm_{n,r}}.
    \end{equation}
    Thus, since $I_{\symm_{n,r}}$ contains the products of any two variables in the same row or column, we obtain the following relations:
    
    \begin{itemize}
        \item $f \cdot \sum_{j \in [n] \setminus J} x_{i,j}^r \equiv 0 \mod I_{\symm_{n,r}}$;
        \item $f \cdot \sum_{i \in [n] \setminus I} x_{i,j}^r \equiv 0 \mod I_{\symm_{n,r}}$.
    \end{itemize}
    We also have the following relations directly from the definition of $I_{\symm_{n,r}}$:
    
    \begin{itemize}
        \item $(x_{i,j}^r)^2 \equiv 0 \mod I_{\symm_{n,r}}$ for $i \in [n] \setminus I$, $j \in [n] \setminus J$;
        \item $x_{i,j}^r \cdot x_{i,j'}^r \equiv 0 \mod I_{\symm_{n,r}}$ for $i \in [n] \setminus I$, $j,j' \in [n] \setminus J$;
        \item $x_{i,j}^r \cdot x_{i',j}^r \equiv 0 \mod I_{\symm_{n,r}}$ for $i,i' \in [n] \setminus I$, $j \in [n] \setminus J$.
    \end{itemize}
    
    If we look at the $(n - |I|) \times (n - |J|)$ matrix of variables, where each variable is $(x_{i,j})^r$ for $i \in [n] \setminus I, j \in [n] \setminus J$, the collection of relations are exactly the ones we used to define $I_n$ in Definition~\ref{defn:defn1}. For a colored permutation $w$ with $C_l(w)=\RR_l$ for $1 \leq l \leq r-1$, if we delete the rows and columns with labels $I$ and $J$ respectively, $C_0(w)$ will form a permutation in the remaining $(n - |I|) \times (n - |J|)$ grid. Since 
    \begin{equation}
        \{C_0(w):w \in B_n; C_l(w) = \RR_l \}
    \end{equation}
    ranges over all possible permutations in the grid labelled by $([n] \setminus I) \times ([n] \setminus J)$, the proof is complete by Theorem~\ref{thm:thm1}.
\end{proof}
Viewing $\symm_{n,r}$ as the collection of $n \times n$ matrices with exactly one nonzero entry in each row and column, and every nonzero entry being an $r$-th root of unity, we have that the following polynomials all vanish on $\symm_{n,r}$: 
\begin{itemize}
    \item $x_{i,j}^{r+1}-x_{i,j}$ for $1 \leq i,j \leq n$;
    \item $x_{i,j} \cdot x_{i,j'}$ for $1 \leq i \leq n$, $1 \leq j < j' \leq n$;
    \item $x_{i,j} \cdot x_{i',j}$ for $1 \leq j \leq n$, $1 \leq i < i' \leq n$;
    \item $x_{i,1}^r+x_{i,2}^r+\dots+x_{i,n}^r -1$ for $1 \leq i \leq n$;
    \item $x_{1,j}^r+x_{2,j}^r+\dots+x_{n,j}^r -1$ for $1 \leq j \leq n$.
\end{itemize}
Let $J$ be the ideal generated by these polynomials, we have $J \subseteq \II(\symm_{n,r})$ and $I_{\symm_{n,r}} \subseteq \gr J \subseteq \gr \II(\symm_{n,r})$. Thus, we have a theorem that is an analogy to Theorem~\ref{thm:thm3}.

\begin{theorem}\label{thm:thm5}
    We have the equalities of ideals $I_{\symm_{n,r}}=\gr \II(\symm_{n,r})$ and $J = \II(\symm_{n,r})$. Moreover, the collection of monomials $\{ \sss(w): w\in \symm_{n,r} \}$ descends to a basis of the vector space $\CC[\xx]/I_{\symm_{n,r}}$. This is the standard monomial basis with respect to the Toeplitz order.
\end{theorem}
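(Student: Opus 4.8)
The plan is to transcribe the proof of Theorem~\ref{thm:thm3} almost verbatim, replacing $2^n n!$ by $|\symm_{n,r}| = r^n n!$ and the pair $(\nega(w),\pos(w))$ by the finer data $(C_1(w),\dots,C_{r-1}(w),C_0(w))$. First I would apply orbit harmonics~\eqref{eq:eqn2} to the locus $\symm_{n,r}$ to obtain
\[
\CC[\symm_{n,r}]\;\cong\;\CC[\xx]/\II(\symm_{n,r})\;\cong\;\CC[\xx]/\gr\II(\symm_{n,r}),
\]
so that $\dim_{\CC}\CC[\xx]/\gr\II(\symm_{n,r})=r^n n!$. The containments $I_{\symm_{n,r}}\subseteq\gr J\subseteq\gr\II(\symm_{n,r})$ established just above the statement then give
\[
r^n n!=\dim\CC[\xx]/\gr\II(\symm_{n,r})\;\le\;\dim\CC[\xx]/\gr J\;\le\;\dim\CC[\xx]/I_{\symm_{n,r}},
\]
while Theorem~\ref{thm:thm4} produces a spanning set $\{\sss(w):w\in\symm_{n,r}\}$ of $\CC[\xx]/I_{\symm_{n,r}}$ indexed by $\symm_{n,r}$, hence of size at most $r^n n!$, so $\dim\CC[\xx]/I_{\symm_{n,r}}\le r^n n!$. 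Squeezing, every inequality becomes an equality: $I_{\symm_{n,r}}=\gr J=\gr\II(\symm_{n,r})$, the $r^n n!$ monomials $\sss(w)$ are distinct and linearly independent, and they form a basis of $\CC[\xx]/I_{\symm_{n,r}}$. Finally $J=\II(\symm_{n,r})$ follows because passing to the associated graded preserves dimension, so $\dim\CC[\xx]/J=\dim\CC[\xx]/\gr J=\dim\CC[\xx]/\II(\symm_{n,r})$, and the surjection $\CC[\xx]/J\twoheadrightarrow\CC[\xx]/\II(\symm_{n,r})$ coming from $J\subseteq\II(\symm_{n,r})$ is then an isomorphism of finite-dimensional spaces.

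To promote ``a basis'' to ``the standard monomial basis with respect to $<_{\topl}$'', I would repeat the second half of the proof of Theorem~\ref{thm:thm3}. Since $x_{i,j}^{r+1}$ and all products $x_{i,j}x_{i,j'}$, $x_{i,j}x_{i',j}$ lie in $I_{\symm_{n,r}}$, every $<_{\topl}$-standard monomial $f$ has the form $f=\prod_{l=1}^{r}m(\RR_l)^l$ with $\bigcup_{l=1}^{r}\RR_l$ a rook placement; set $\RR_l=C_l(w)$ for $1\le l\le r-1$ and let $I,J$ be the rows and columns met by $\bigcup_{l=1}^{r-1}\RR_l$, exactly as in the proof of Theorem~\ref{thm:thm4}. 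That proof shows $\big(\prod_{l=1}^{r-1}m(\RR_l)^l\big)$ times each generator of the copy of $I_n$ sitting in the variables $\{x_{i,j}^r : i\in[n]\setminus I,\; j\in[n]\setminus J\}$ lies in $I_{\symm_{n,r}}$; as $f$ is $<_{\topl}$-standard, $m(\RR_r)^r$ cannot be the $<_{\topl}$-leading monomial of any such combination, so $m(\RR_r)$, read in the $([n]\setminus I)\times([n]\setminus J)$ subgrid, is standard for that copy of $I_n$. By Rhoades's determination of the standard monomial basis of $\CC[\xx]/I_n$ for the Toeplitz order~\cite{rhoades2023increasing}, $m(\RR_r)=\sss(v)$ for a unique permutation $v$ of the subgrid; reinserting the deleted rows and columns yields $w\in\symm_{n,r}$ with $\sss(w)=f$. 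Hence every $<_{\topl}$-standard monomial has the form $\sss(w)$, and since there are exactly $r^n n!=\dim\CC[\xx]/I_{\symm_{n,r}}$ of them, the standard monomial basis is precisely $\{\sss(w):w\in\symm_{n,r}\}$.

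The step I expect to need the most care is the descent used in the previous paragraph: one must check that $<_{\topl}$-standardness of $f$ genuinely passes to $<_{\topl}$-standardness of $m(\RR_r)$ for the sub-rectangle copy of $I_n$. This has two components --- that the substitution $x_{i,j}\mapsto x_{i,j}^r$ is order-preserving on monomials (clear), and that the Toeplitz order restricted to a sub-rectangle of the grid is compatible with Rhoades's result (the Toeplitz order need not literally restrict to the Toeplitz order on a relabeled $m\times m$ grid, so one appeals to the form of~\cite{rhoades2023increasing} valid for the induced order, exactly the bookkeeping already invoked in the proof of Theorem~\ref{thm:thm3}). Everything else is a routine transcription of the $r=2$ argument with $2^n n!$ replaced by $r^n n!$.
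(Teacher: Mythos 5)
Your proof is correct and follows essentially the same squeeze argument as the paper: orbit harmonics gives $\dim \CC[\xx]/\gr\II(\symm_{n,r}) = r^n n!$, the spanning set from Theorem~\ref{thm:thm4} caps $\dim \CC[\xx]/I_{\symm_{n,r}}$ at $r^n n!$, the containments force equality everywhere, and the standard-monomial-basis claim is handled by the same sub-rectangle reduction to Rhoades's $I_n$ result used in the proof of Theorem~\ref{thm:thm3}. You in fact spell out two steps the paper leaves implicit --- the deduction $J = \II(\symm_{n,r})$ from $\gr J = \gr\II(\symm_{n,r})$ via dimension preservation under $\gr$, and the caveat that the Toeplitz order on a sub-rectangle need not literally be the Toeplitz order of the relabeled grid --- which is appropriate care rather than a deviation from the paper's route.
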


\begin{proof}
    Applying orbit harmonics, we have the following sequence of isomorphisms of (ungraded) vector spaces
    \begin{equation}
    \CC[\symm_{n,r}] \cong \CC[\xx]/ \II(\symm_{n,r}) \cong \CC[\xx]/\gr \II(\symm_{n,r}).   
    \end{equation}
    With $I_{\symm_{n,r}} \subseteq \gr J \subseteq \gr \II(\symm_{n,r})$, we have 
    \begin{equation}\label{eq:eqn5}
       r^n n! = \dim (\CC[\xx]/\gr \II(\symm_{n,r})) \leq \dim(\CC[\xx]/gr J) \leq \dim(\CC[\xx]/I_{\symm_{n,r}}).
    \end{equation}
    Since there is a unique tuple $(C_1(w), \dots C_{r-1} (w), \SSS(C_r(w))$ for each colored permutation $w$, the spanning set $\{\sss(w) : w \in \symm_{n,r}\}$ in Theorem~\ref{thm:thm4} has size $r^n n!$. This forces all inequalities in ~\eqref{eq:eqn5} to be equalities. This proves the equalities of ideals, and hence proves that $\{\sss(w) : w \in \symm_{n,r}\}$ descends to a basis of $\CC[\xx]/I_{\symm_{n,r}}$. 
    The proof that this is actually the standard monomial basis with respect to the Toeplitz order is very similar to the proof of Theorem~\ref{thm:thm3}.
\end{proof}

We also generalize the notion of longest increasing subsequence to the colored permutation group. We first give the following two definitions:
\begin{defn}
    Given $w \in \symm_{n,r}$, we define $\lis(C_0(w))$ to be the size of the largest subset $L$ of $C_0(w)$ such that given $(i,j)$ and $(i',j')$ in $L$, $i<i'$ implies $j<j'$.
\end{defn}
\begin{defn}\label{defn:c_n,r,k}
    For integers $n,r,k$, we define
    \begin{equation}
        c_{n,r,k} = \left| \left\{ w \in \symm_{n,r}: r \cdot \lis(C_0(w)) + \sum_{i=1}^{r-1} (r-i) \cdot |C_i(w)| = k\right\} \right|
    \end{equation}
\end{defn}

\begin{example}
    Let $n=6$, $r=4$, and $w = [4^2, 1^0, 5^2, 6^3, 2^1, 3^0 ]$. We have $C_0 (w) = \{ (2,1), (6,3) \}$, so $\lis(C_0(w)) = 2$, and we have 
    \begin{equation}
        4 \cdot \lis(C_0(w)) + 3|C_1(w)| + 2 |C_2(w)| + |C_3(w)| = 4 \times 2 +3 \times 1 + 2 \times 2 + 1 \times 1 = 16,
    \end{equation}
    so $w$ would contribute 16 to $c_{6,4,16}$.
\end{example}

With these two definitions, we give the Hilbert series of $\CC[\xx]/I_{\symm_{n,r}}$.
\begin{theorem}\label{thm:hilb_n,r}
    The Hilbert series of $\CC[\xx]/I_{\symm_{n,r}}$ is 
    \begin{equation}
        \Hilb(\CC[\xx]/I_{\symm_{n,r}};q) = \sum_{k=1}^{rn} c_{n,r,k} q^{rn-k}
    \end{equation}
\end{theorem}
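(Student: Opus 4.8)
The plan is to leverage Theorem~\ref{thm:thm5}, which already identifies $\{\sss(w) : w \in \symm_{n,r}\}$ as the standard monomial basis of $\CC[\xx]/I_{\symm_{n,r}}$ with respect to the Toeplitz order. Since each $\sss(w)$ is a monomial, it is homogeneous, so this basis is compatible with the grading: the degree-$d$ piece $(\CC[\xx]/I_{\symm_{n,r}})_d$ has as a $\CC$-basis exactly $\{\sss(w) : \deg \sss(w) = d\}$. Consequently
\[
\Hilb(\CC[\xx]/I_{\symm_{n,r}}; q) = \sum_{w \in \symm_{n,r}} q^{\deg \sss(w)},
\]
and the theorem reduces to the purely combinatorial task of computing $\deg \sss(w)$ in terms of the statistic in Definition~\ref{defn:c_n,r,k}.

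Next I would compute $\deg \sss(w)$ directly from the defining formula $\sss(w) = m(C_1(w)) \cdot m(C_2(w))^2 \cdots m(C_{r-1}(w))^{r-1} \cdot m(\SSS(C_0(w)))^r$. Each $m(C_i(w))$ is square-free of degree $|C_i(w)|$ and $m(\SSS(C_0(w)))$ is square-free of degree $|\SSS(C_0(w))|$, so $\deg \sss(w) = \sum_{i=1}^{r-1} i\,|C_i(w)| + r\,|\SSS(C_0(w))|$. The one point requiring care is the size of $\SSS(C_0(w))$: the set $C_0(w)$ is a partial permutation occupying the rows indexed by $[n]\setminus I$ and columns indexed by $[n]\setminus J$ (notation as in the proof of Theorem~\ref{thm:thm4}), and after the order-preserving relabeling of those rows and columns it becomes a genuine permutation of a set of size $|C_0(w)|$ with $\lis$ unchanged. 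Lemma~\ref{lem:size} then yields $|\SSS(C_0(w))| = |C_0(w)| - \lis(C_0(w))$.

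Finally, using that each row of $w$ has exactly one nonzero entry, so that $|C_0(w)| + \sum_{i=1}^{r-1} |C_i(w)| = n$, I would substitute to obtain
\begin{align*}
\deg \sss(w) &= \sum_{i=1}^{r-1} i\,|C_i(w)| + r\bigl(|C_0(w)| - \lis(C_0(w))\bigr)\\
&= rn - \sum_{i=1}^{r-1}(r-i)\,|C_i(w)| - r\,\lis(C_0(w))\\
&= rn - k, \qquad k := r\,\lis(C_0(w)) + \sum_{i=1}^{r-1}(r-i)\,|C_i(w)|.
\end{align*}
Hence $|\{w \in \symm_{n,r} : \deg \sss(w) = rn - k\}| = c_{n,r,k}$ for every integer $k$, and summing over $w$ gives $\Hilb(\CC[\xx]/I_{\symm_{n,r}}; q) = \sum_{k} c_{n,r,k}\, q^{rn-k}$. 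To see the index range $1 \le k \le rn$: the bound $\deg \sss(w) \ge 0$ forces $k \le rn$, and $k \ge 1$ follows from the extreme cases, since if $C_0(w)$ is nonempty then $\lis(C_0(w)) \ge 1$ and $k \ge r$, while if $C_0(w)$ is empty then $k \ge \sum_{i=1}^{r-1} |C_i(w)| = n \ge 1$ because each coefficient $r-i$ is at least $1$.

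Because all the required inputs — the standard monomial basis of Theorem~\ref{thm:thm5} and the size formula of Lemma~\ref{lem:size} — are already in place, I do not anticipate a genuine obstacle. The only subtlety is the bookkeeping in the computation of $|\SSS(C_0(w))|$, which is dispatched by the order-preserving relabeling remark above; everything else is a routine degree count.
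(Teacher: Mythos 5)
Your proposal is correct and follows essentially the same route as the paper's proof: Theorem~\ref{thm:thm5} reduces the Hilbert series to a count of the monomials $\sss(w)$ by degree, and the identity $\deg \sss(w) = rn - r\,\lis(C_0(w)) - \sum_{i=1}^{r-1}(r-i)|C_i(w)|$ then gives the result. Your extra care with the order-preserving relabeling needed to apply Lemma~\ref{lem:size} to the partial permutation $C_0(w)$, and your verification of the index range $1 \le k \le rn$, are small refinements the paper elides but do not change the argument.
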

\begin{proof}
    For $w \in \symm_{n,r}$,
    \begin{align*}
        \deg \sss(w) &= r \cdot |\SSS(C_0(w))| + \sum_{i=1}^{r-1} i \cdot |C_i(w)| \\
        &= r \cdot (C_0(w) - \lis(C_0(w))) +\sum_{i=1}^{r-1} i \cdot |C_i(w)| \\
        &= rn - r \cdot \lis(C_0(w)) - \sum_{i=1}^{r-1} (r-i) \cdot |C_i(w)|
    \end{align*}
Thus, $|\{w \in \symm_{n,r} : \deg \sss(w) = k\}| = c_{n,r,rn-k}$ and Theorem~\ref{thm:thm5} completes the proof.
\end{proof}

\section{Module Structure}\label{Module}

As mentioned in the Introduction, treating $\symm_{n,r}$ as the group of $n \times n$ matrices with exactly one nonzero entry in each row and column, and every nonzero entry being an $r$-th root of unity, we can let $\symm_{n,r} \times \symm_{n,r}$ act on $\CC[\xx]$, induced by its action on the matrix of variables
\begin{equation}
    (A,A') \cdot \xx = A \xx A'^{-1}.
\end{equation}
Since the collection of generators of $I_{\symm_{n,r}}$ is stable under this action, we have that $\CC[\xx]/I_{\symm_{n,r}}$ is an $\symm_{n,r} \times \symm_{n,r}$ module. By Theorem~\ref{thm:thm5}, we have an isomorphism and an equality of ungraded vector spaces
\begin{equation}\label{eq:ungraded}
    \CC[\symm_{n,r}] \cong \CC[\xx]/\gr \II(\symm_{n,r}) = \CC[\xx]/I_{\symm_{n,r}}.
\end{equation}
Since $\CC[\symm_{n,r}]$ is semi-simple, ~\eqref{eq:ungraded} upgrades to an isomorphism and an equality of ungraded $\symm_{n,r} \times \symm_{n,r}$ modules.

To investigate the graded structure of $\CC[\xx]/I_{\symm_{n,r}}$, we first introduce a lemma, which is a crucial part in orbit harmonics. Let $\x$ be a collection of variables and $\CC[\x]$ be the polynomial ring. Given a graded $\CC$-algebra $A = \bigoplus_{i \geq 0} A_i$, let $A_{\leq d} = \bigoplus_{i=0}^{d} A_i$ be the direct sum of its homogeneous components with degree less than or equal to $d$.

\begin{lemma}~\label{lem:lem2}
    Let $I$ be an ideal in $\CC[\x]$ and $\gr I$ be the associated graded ideal of it. Let $\mathcal{B} \subseteq \CC[\x]_{\leq d}$ be a collection of homogeneous polynomials with degree at most $d$. If $\mathcal{B}$ descends to a basis of the vector space $(\CC[\x]/gr I)_{\leq d}$, then $\mathcal{B}$ descends to a basis of the vector space $\CC[\x]_{\leq d}/(I \cap \CC[\x]_{\leq d})$.
\end{lemma}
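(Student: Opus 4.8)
The plan is to prove the two halves of ``being a basis'' separately: that $\mathcal{B}$ spans $\CC[\x]_{\le d}/(I\cap\CC[\x]_{\le d})$, and that it descends to a linearly independent set there. Both halves rest on the degree filtration of $\CC[\x]$ together with the operation $\tau$ sending a polynomial to its top-degree homogeneous part. The one structural fact I will extract first is a description of $\gr I$ degree by degree: for each $e$, the graded piece $(\gr I)_e$ equals $\{\tau(g) : g\in I,\ \deg g=e\}\cup\{0\}$. One inclusion is immediate from the definition of $\tau$; for the other, a homogeneous degree-$e$ element of the ideal $\gr I$ has the form $\sum_i p_i\tau(g_i)$ with each $p_i$ homogeneous and $\deg p_i+\deg g_i=e$, and since the top part of a product is the product of top parts, $\sum_i p_i\tau(g_i)=\tau\!\big(\sum_i p_i g_i\big)$ with $\sum_i p_i g_i\in I$ of degree $e$ unless the sum is $0$. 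I will also record that, because $\mathcal{B}$ descends to a basis of $(\CC[\x]/\gr I)_{\le d}$ and consists of homogeneous polynomials, its elements are $\CC$-linearly independent in $\CC[\x]$, and for each $e\le d$ the subset $\mathcal{B}_e\subseteq\mathcal{B}$ of degree-$e$ members descends to a basis of $(\CC[\x]/\gr I)_e$.

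For spanning, I would induct on the degree of a polynomial $f\in\CC[\x]_{\le d}$, the case $f=0$ being trivial. With $e=\deg f$ and $f_e=\tau(f)$, the fact that $\mathcal{B}_e$ is a basis of $(\CC[\x]/\gr I)_e$ produces scalars $(c_b)_{b\in\mathcal{B}_e}$ with $f_e-\sum_{b\in\mathcal{B}_e}c_b\,b\in(\gr I)_e$; by the structural fact this difference is $\tau(g)$ for some $g\in I$ with $\deg g=e$ (or is $0$, in which case take $g=0$). Then $f-\sum_{b\in\mathcal{B}_e}c_b\,b-g$ has degree strictly less than $e$, still lies in $\CC[\x]_{\le d}$, and is congruent to $f-\sum_b c_b\,b$ modulo $I\cap\CC[\x]_{\le d}$ --- here it is essential that $g$ itself has degree $e\le d$, so that $g\in I\cap\CC[\x]_{\le d}$ and not merely $g\in I$. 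The inductive hypothesis applied to this lower-degree polynomial completes the spanning argument.

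For linear independence, suppose $\sum_{b\in\mathcal{B}}c_b\,b\in I\cap\CC[\x]_{\le d}$ with the $c_b$ not all zero. Since the elements of $\mathcal{B}$ are $\CC$-linearly independent, $g:=\sum_b c_b\,b$ is a nonzero element of $I$; set $e=\max\{\deg b : c_b\ne 0\}\le d$, so that the degree-$e$ component $\tau(g)=\sum_{b\in\mathcal{B}_e}c_b\,b$ is nonzero and, since $g\in I$, lies in $(\gr I)_e$. This contradicts the linear independence of $\mathcal{B}_e$ modulo $\gr I$. I expect the only delicate point to be the bookkeeping around $\tau$ and $\gr I$: pinning down $(\gr I)_e$ exactly, and making sure every auxiliary polynomial produced along the way (in particular the $g$ subtracted off in the spanning step) remains of degree $\le d$, so that it genuinely lies in $I\cap\CC[\x]_{\le d}$. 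Once that is arranged, both arguments are counting-free and go through directly.
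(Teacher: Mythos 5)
Your proof is correct, and since the paper simply cites Rhoades's earlier Lemma 3.15 for this fact, your writeup is a self-contained version of what is almost certainly the same standard orbit-harmonics argument. The structural characterization $(\gr I)_e = \{\tau(g) : g\in I,\ \deg g = e\}\cup\{0\}$ is the crux, and your justification of the nontrivial inclusion (restrict a general ideal combination to its degree-$e$ homogeneous part, identify $\sum_i p_i\tau(g_i)$ with $\tau(\sum_i p_i g_i)$ provided the sum is nonzero) is exactly right. The triangularity/induction step for spanning and the leading-term argument for independence are both sound, and you correctly flagged the one bookkeeping point that actually matters: the auxiliary $g$ subtracted off has degree exactly $e\le d$, so it genuinely lies in $I\cap\CC[\x]_{\le d}$ rather than merely in $I$. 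I see no gap.

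One cosmetic note: you silently use that $\gr I$ is a homogeneous ideal (so that $(\CC[\x]/\gr I)_{\le d}=\bigoplus_{e\le d}(\CC[\x]/\gr I)_e$ and $\mathcal{B}_e$ descends to a basis of each graded piece); this follows because $\gr I$ is generated by the homogeneous elements $\tau(f)$, but it is worth stating explicitly when writing this up, since it is what lets you work one degree at a time.
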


\begin{proof}
    The proof can be found in the proof of a known result~\cite[Lemma 3.15]{rhoades2023increasing}.
\end{proof}

\subsection{Graded Module Structure of $\CC[\xx]/I_{B_n}$}

We first work on the case when $r=2$. In this case, the irreducible $B_n \times B_n$ modules are of form $V^{(\lambda,\mu)} \otimes V^{(\tl,\tm)}$, where $( \lambda,\mu )$ and $(\tl,\tm)$ are bipartitions of $n$.
Equation~\eqref{eq:ungraded} now reads:
\begin{equation}
    \CC[B_n] \cong \CC[\xx]/\gr \II(B_n) = \CC[\xx]/I_{B_n}.
\end{equation}
We give the main theorem of this section.

\begin{theorem}\label{thm:B_n}
    For any $k \geq 0$, the degree $k$ piece $(\CC[\xx]/I_{B_n})_{k}$ has graded $B_n \times B_n$ module structure
    \begin{equation}
        (\CC[\xx]/I_{B_n})_{k} \cong \bigoplus_{\substack{(\lambda,\mu) \vdash_2 n \\ 2 \lambda_1 + |\mu| \geq 2n-k}} V^{(\lambda,\mu)} \otimes V^{(\lambda,\mu)}.
    \end{equation}
\end{theorem}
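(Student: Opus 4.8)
The plan is to reduce to Rhoades's $\symm_n$-theorem by slicing according to the size of the negative part, and then reassemble using the $B_n$ branching rule \eqref{eq:branching}. First, by orbit harmonics \eqref{eq:ungraded} and semisimplicity of $\CC[B_n]$, as an \emph{ungraded} $B_n\times B_n$-module
\[
\CC[\xx]/I_{B_n}\;\cong\;\CC[B_n]\;\cong\;\bigoplus_{(\lambda,\mu)\vdash_2 n} V^{(\lambda,\mu)}\otimes\big(V^{(\lambda,\mu)}\big)^{*}.
\]
For $r=2$ the character $\theta$ in the Murnaghan--Nakayama rule (Theorem~\ref{M-N}) takes values in $\{\pm1\}$, so every irreducible character of $B_n$ is real-valued and $\big(V^{(\lambda,\mu)}\big)^{*}\cong V^{(\lambda,\mu)}$. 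Hence each irreducible $V^{(\lambda,\mu)}\otimes V^{(\lambda,\mu)}$ occurs in $\CC[\xx]/I_{B_n}$ with multiplicity exactly one, and it remains only to locate the single graded degree in which it lives.

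To see the grading I would slice by the size $m$ of the negative rook placement. For $0\le m\le n$ let $M_m\subseteq\CC[\xx]/I_{B_n}$ be the span of the classes $m(\RR_1)\,m(\RR_2)^2$ with $|\RR_1|=m$ and $\RR_1\cup\RR_2$ a rook placement; by Theorem~\ref{thm:thm3} this equals $\mathrm{span}\{\sss(w):|\nega(w)|=m\}$, so $\CC[\xx]/I_{B_n}=\bigoplus_{m=0}^{n}M_m$ and each $M_m$ is a graded $B_n\times B_n$-submodule. For a fixed $\RR_1$, the argument in the proof of Theorem~\ref{thm:thm2} shows that the subspace $m(\RR_1)A_{\RR_1}$ spanned by $\{m(\RR_1)m(\RR_2)^2\}$ is, via $x_{i,j}^2\leftrightarrow y_{i,j}$, isomorphic to $\CC[\mathbf{y}_{(n-m)\times(n-m)}]/I_{n-m}$ as a graded module over the copy of $\symm_{n-m}\times\symm_{n-m}$ acting on the complementary rows and columns, with the key bookkeeping that $(x\text{-degree})=|\RR_1|+2\cdot(y\text{-degree})$. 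Since row and column permutations already act transitively on size-$m$ rook placements, $B_n\times B_n$ permutes the summands $\{m(\RR_1)A_{\RR_1}:|\RR_1|=m\}$ transitively, so $M_m\cong\mathrm{Ind}_{H_m}^{B_n\times B_n}W_m$ where $H_m$ is the stabilizer of a fixed summand $m(\RR_1^{\circ})A_{\RR_1^{\circ}}$ and $W_m$ is that summand as an $H_m$-module.

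Next I would make $H_m$ and $W_m$ explicit and run the induction. Taking $\RR_1^{\circ}=\{(1,1),\dots,(m,m)\}$, one finds $H_m\cong G_m\times B_{n-m}\times B_{n-m}$ inside $(B_m\times B_{n-m})\times(B_m\times B_{n-m})\subseteq B_n\times B_n$, where $G_m=\{(\beta,\beta')\in B_m\times B_m:\beta,\beta'\text{ share the same underlying permutation}\}$; the group $G_m$ acts on the line $\CC\,m(\RR_1^{\circ})$ through the one-dimensional character $\epsilon\otimes\epsilon$ ($\epsilon$ the product-of-signs character of $B_m$) and trivially on $A_{\RR_1^{\circ}}$, while $B_{n-m}\times B_{n-m}$ acts on $A_{\RR_1^{\circ}}$ through its quotient $\symm_{n-m}\times\symm_{n-m}$ and trivially on $m(\RR_1^{\circ})$, so $W_m\cong(\epsilon\otimes\epsilon)\otimes\big(\CC[\mathbf{y}_{(n-m)\times(n-m)}]/I_{n-m}\big)$. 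One then computes $\mathrm{Ind}_{G_m}^{B_m\times B_m}(\epsilon\otimes\epsilon)\cong\bigoplus_{\rho\vdash m}V^{(\emptyset,\rho)}\otimes\big(V^{(\emptyset,\rho)}\big)^{*}$ (induce the trivial character of the diagonal $\symm_m$, twist each factor by $\epsilon$, and use $\epsilon\otimes V^{(\rho,\emptyset)}\cong V^{(\emptyset,\rho)}$), feeds in Rhoades's decomposition $\CC[\mathbf{y}_{(n-m)\times(n-m)}]/I_{n-m}\cong\bigoplus_{\nu\vdash n-m}V^{(\nu,\emptyset)}\otimes\big(V^{(\nu,\emptyset)}\big)^{*}$ (inflated from $\symm_{n-m}$), commutes induction past the external tensor product, and applies \eqref{eq:branching} with Frobenius reciprocity: since the generalized Littlewood--Richardson coefficient $c^{(\lambda^0,\lambda^1)}_{(\emptyset,\rho),(\nu,\emptyset)}=c^{\lambda^0}_{\emptyset,\nu}\,c^{\lambda^1}_{\rho,\emptyset}=[\lambda^0=\nu]\,[\lambda^1=\rho]$, the induced module $\mathrm{Ind}_{B_m\times B_{n-m}}^{B_n}\big(V^{(\emptyset,\rho)}\otimes V^{(\nu,\emptyset)}\big)$ is simply $V^{(\nu,\rho)}$. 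This yields $M_m\cong\bigoplus_{\nu\vdash n-m,\;\rho\vdash m}V^{(\nu,\rho)}\otimes\big(V^{(\nu,\rho)}\big)^{*}$, i.e.\ $M_m$ carries exactly the isotypic pieces $V^{(\lambda,\mu)}\otimes V^{(\lambda,\mu)}$ with $|\mu|=m$.

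Finally I would read off the degree. Within $M_m$, the $(\nu,\rho)$-piece is the image of the internal-degree-$\big((n-m)-\nu_1\big)$ component of $\CC[\mathbf{y}_{(n-m)\times(n-m)}]/I_{n-m}$ (by Rhoades's theorem applied to $\symm_{n-m}$), which the degree shift recorded above places in $x$-degree $m+2\big((n-m)-\nu_1\big)=2n-2\nu_1-m$; writing $\lambda=\nu$ and $\mu=\rho$ (so $|\mu|=m$), the summand $V^{(\lambda,\mu)}\otimes V^{(\lambda,\mu)}$ sits in degree $2n-2\lambda_1-|\mu|$. Summing over $m$, together with the multiplicity-one statement, gives the asserted description of $(\CC[\xx]/I_{B_n})_k$. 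I expect the main obstacle to be the identification in the third paragraph: correctly pinning down the one-dimensional character by which the stabilizer $H_m$ acts on the monomial $m(\RR_1^{\circ})$ — in particular getting the $\epsilon$-twist and the identity $\epsilon\otimes V^{(\rho,\emptyset)}\cong V^{(\emptyset,\rho)}$ right — together with carrying the ``$x$-degree $=m+2\cdot(y\text{-degree})$'' bookkeeping consistently through the chain of inductions; once these are in hand, the branching rule and the vanishing $c^{\lambda}_{\emptyset,\nu}=[\lambda=\nu]$ make the remaining steps routine.
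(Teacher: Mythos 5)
Your proposal is correct, and it reaches the conclusion by a genuinely different route than the paper. The paper works with the filtration $L_k = (\CC[\xx]/I_{B_n})_{\leq k}$: after identifying $L_k$ with a two-sided ideal $J_k \subseteq \CC[B_n]$ generated by the explicit group-algebra elements
\begin{equation*}
\epsilon_i^j = (e+t_{1,-1})\cdots(e+t_{i,-i})(e-t_{i+1,-i-1})\cdots(e-t_{j-i,-j+i})\Bigl(\sum_{w \in B_{n-j+i}} w\Bigr),
\end{equation*}
it determines which $\End_{\CC}(V^{(\lambda,\mu)})$ lie in $J_k$ by evaluating $\chi^{(\lambda,\mu)}(\epsilon_i^j)$ via the branching rule \eqref{eq:branching} and the generalized Murnaghan--Nakayama rule (Theorem~\ref{M-N}). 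You instead observe that $\CC[\xx]/I_{B_n}$ decomposes as a \emph{direct sum} of graded $B_n\times B_n$-submodules $M_m$ indexed by the size $m$ of the square-free support (this works because the generators of $I_{B_n}$ are homogeneous for the grading that records which variables appear to an odd power, and the $B_n\times B_n$-action is by signed monomial substitutions, so it preserves that grading), identify each $M_m$ as a module induced from $G_m\times B_{n-m}\times B_{n-m}$ with $G_m = B_m \times_{\symm_m} B_m$, and then feed in Rhoades's graded decomposition of $\CC[\mathbf{y}_{(n-m)\times(n-m)}]/I_{n-m}$ along with the bipartite Littlewood--Richardson computation $c^{(\lambda^0,\lambda^1)}_{(\emptyset,\rho),(\nu,\emptyset)} = [\lambda^0{=}\nu][\lambda^1{=}\rho]$. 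This buys a more structural picture (the $M_m$-decomposition is of independent interest and transparently reproves Corollary~\ref{cor:hilbert}), at the cost of importing the full graded module theorem for $\symm_{n-m}$ as a black box; the paper's argument, while more computational, is self-contained modulo the two classical character rules, and it already contains the $r$-ary generalization.

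Two small points worth recording. First, the displayed theorem reads $2\lambda_1 + |\mu| \geq 2n-k$, but (as the paper's own proof makes explicit by passing to $(\CC[\xx]/I_{B_n})_{\leq k}$, and as is consistent with Theorem~\ref{thm:colored}) the condition for the degree-$k$ piece itself is the equality $2\lambda_1 + |\mu| = 2n-k$; your argument lands directly on the equality, which is the intended statement. Second, the isomorphism $m(\RR_1^{\circ})A_{\RR_1^{\circ}} \cong \CC[\mathbf{y}_{(n-m)\times(n-m)}]/I_{n-m}$ is a genuine isomorphism (not merely a surjection from the relations used in the proof of Theorem~\ref{thm:thm2}) because the standard-monomial basis $\{\sss(w):\nega(w)=\RR_1^{\circ}\}$ of the left side corresponds bijectively to $\{\sss(u):u\in\symm_{n-m}\}$ on the right; you implicitly use this when you transport the grading, and it is the step on which the dimension count from Theorem~\ref{thm:thm3} is silently doing work.
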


\begin{proof}
    For $W$ any $B_n \times B_n$ module, $\End_{\CC}(W)$ is also a $B_n \times B_n$ module with the action
    \begin{equation}
        ((u,v) \cdot \varphi) (w) = u \cdot \varphi(v^{-1} \cdot w) \text{ for all } u,v \in B_n \text{, } \varphi \in \End_{\CC}(W), w \in W.
    \end{equation}
    
    We also have that $\End_{\CC}(W)$ is isomorphic to $W \otimes W^*$ as $B_n \times B_n$ modules, where $W^*$ is the dual module of $W$, via the isomorphism 
    \begin{equation}\label{eq:eqn4}
        W \otimes W^* \rightarrow \End_{\CC}(W), \quad a \otimes x \mapsto \varphi, \text{ with } \varphi(w) = x(w) \cdot a,
    \end{equation}
    where $a,w \in W$ and $x \in W^*$. Then, since all matrix representations of $B_n$ can be realized over the real numbers, we have that $W$ is isomorphic to $W^*$. So we have an isomorphism of $B_n \times B_n$ modules
    \begin{equation}\label{eq:End}
        \End_{\CC}(W) \cong W \otimes W.
    \end{equation}

    Since the algebra $\CC[B_N]$ is semi-simple, the {\em Artin-Wedderburn Theorem} gives the isomorphism of $\CC$-algebras
    \begin{equation}\label{eq:A-W}
        \Psi : \CC[B_n] \xrightarrow{ \, \, \sim \, \, } \bigoplus_{(\lambda,\mu) \vdash_2 n} \End_{\CC}(V^{(\lambda,\mu)}),
    \end{equation}
    where the $(\lambda,\mu)$-piece of $\Psi(a)$ maps $v$ to $a \cdot v$ for $a \in \CC[B_n]$, $v \in V^{(\lambda,\mu)}$. Since $\CC[B_n]$ is a $B_n \times B_n$ module via left and right action, ~\eqref{eq:End} and ~\eqref{eq:A-W} imply that we have the isomorphism of ungraded $B_n \times B_n$ modules
    \begin{equation}\label{eq:pf_chain}
        \CC[B_n] \cong \bigoplus_{(\lambda,\mu) \vdash_2 n} \End_{\CC}(V^{(\lambda,\mu)}) \cong \bigoplus_{(\lambda,\mu) \vdash_2 n} V^{(\lambda,\mu)} \otimes V^{(\lambda,\mu)}.
    \end{equation}

    Returning to the statement of the theorem, by an inductive argument, it suffices to prove that for any $k$, we have the isomorphism of ungraded $B_n \times B_n$ modules:
    \begin{equation}
        (\CC[\xx]/I_{B_n})_{\leq k} \cong \bigoplus_{\substack{(\lambda,\mu) \vdash_2 n \\ 2 \lambda_1 + |\mu| \geq 2n-k}} \End_{\CC}(V^{(\lambda,\mu)}).
    \end{equation}
    With Theorem~\ref{thm:thm3}, we have a chain of isomorphisms and an equality of ungraded $B_n \times B_n$ modules:
    \begin{equation}\label{eq:Bn-chain}
        \CC[B_n] \cong \CC[\xx]/\II(B_n) \cong \CC[\xx]/\gr \II(B_n) = \CC[\xx]/I_{B_n}.
    \end{equation}
    Then, consider the image of $\CC[\xx]_{\leq k}$ in $\CC[\xx]/\II(B_n)$, denoted by $L_k$, i.e.
    \begin{equation}\label{Bn:L_k}
        L_k = \mathrm{Image}(\CC[\xx]_{\leq k} \hookrightarrow \CC[\xx] \twoheadrightarrow \CC[\xx]/\II(B_n)  ).
    \end{equation}
    By Lemma~\ref{lem:lem1} and Lemma~\ref{lem:lem2}, we have that
    \begin{equation}
        L_k = \mathrm{span}_{\CC} \{ m(\RR_1)m(\RR_2)^2 + \II(B_n) : \RR_1 \cup \RR_2 \text{ is a rook placement and }  |\RR_1|+2|\RR_2| \leq k   \}.
    \end{equation}
    Then, Lemma~\ref{lem:lem2} implies that we have the isomorphism of $B_n \times B_n$ modules
    \begin{equation}
        L_k \cong (\CC[\xx]/I_{B_n})_{\leq k}.
    \end{equation}
    So we are reduced to showing that 
    \begin{equation}
        L_k \cong \bigoplus_{\substack{(\lambda,\mu) \vdash_2 n \\ 2 \lambda_1 + |\mu| \geq 2n-k}} \End_{\CC}(V^{(\lambda,\mu)}).
    \end{equation}
    
    For any $j \leq k$, consider the collection of monomials
    \begin{equation}
        M^j = \left\{ m_i^j = x_{1,1}^2 x_{2,2}^2 \cdots x_{i,i}^2 x_{i+1,i+1}x_{i+2,i+2} \cdots x_{j-i,j-i} : 0 \leq i \leq \lfloor j/2 \rfloor \right\}.
    \end{equation}
    For example, when $j=4$, the collection of monomials is
    \begin{equation}
        M^j = \left\{ x_{1,1}^2 x_{2,2}^2, x_{1,1}^2 x_{2,2} x_{3,3}, x_{1,1}x_{2,2}x_{3,3}x_{4,4} \right\}.
    \end{equation}
    Since nonzero monomials in $\CC[\xx]/I_{B_n}$ are cube-free, the degree $j$ monomials in $\CC[\xx]/I_{B_n}$ are generated by the action of $B_n \times B_n$ on $M^j$. Thus, $L_k$ is generated by the action of $B_n \times B_n$ on $\bigcup_{j \leq k}  M^j$. 
    
    Fix $0 \leq j \leq k$. For all $i$ satisfying $0 \leq i \leq \lfloor j/2 \rfloor$, embed $B_{n-j+i} \subseteq B_n$ by letting it act on the last $n-j+i$ letters. Let $\epsilon_i^j$ be the group algebra element
    \begin{equation}
        \epsilon_i^j =  (e+t_{1,-1})(e+t_{2,-2}) \cdots (e+t_{i,-i}) (e-t_{i+1,-i-1}) \cdots (e-t_{j-i,-j+i}) \left( \sum_{w \in B_{n-j+i}} w \right),
    \end{equation}    
    where $e$ denotes the identity element and $t_{l,-l}$ denotes the transposition in $B_n$ that interchanges $l$ and $-l$, and fixing all other elements. Identify $\CC[B_n]$ with the collection of functions from $B_n$ to $\CC$, via the rule
    \begin{equation}
        \sum_{w \in B_n} a_w w \mapsto f: f(w)=a_w.
    \end{equation}    
    Since $\CC[\xx]/\II(B_n)$ is naturally identified with functions from $B_n$ to $\CC$, we can identify $\epsilon_i^j$ with $m_i^j$. Let $\Sigma^j = \{ \epsilon_i^j : 0 \leq i \leq \lfloor j/2 \rfloor \}$, and let $J_k$ be the two-sided ideal generated by $\bigcup_{0 \leq j \leq k} \Sigma^j$. We have the identification
    \begin{equation}
        J_k = L_k.
    \end{equation}

    So we are now reduced to find the image of $J_k$ under the Artin-Wedderburn map in~\eqref{eq:A-W}. Note that the image of $J_k$ would be
    direct sum of endomorphisms of irreducible modules such that not all elements of $J_k$ act as the $0$ operator. Fixing $j \leq k$, we need to find the irreducible modules such that not all elements of $\Sigma_j$ act as $0$.

    Note that $\epsilon_i^j$ is naturally an element of the group algebra $\CC[B_{j-i} \times B_{n-j+i}]$. Given $V^{(\lambda,\mu)}$ an irreducible module of $B_n$ labelled by bipartition $(\lambda, \mu)$, it suffices to analyze the character of
    \begin{equation}
        \Res^{B_n}_{B_{j-i} \times B_{n-j+i}} V^{(\lambda,\mu)},
    \end{equation}
    evaluated on $\epsilon_i^j$. By the Branching Rule~\eqref{eq:branching},
    \begin{equation}
        \Res^{B_n}_{B_{j-i} \times B_{n-j+i}} \chi^{(\lambda,\mu)} = \sum_{(\tl, \tm)} \chi^{(\tl, \tm)} \times \chi^{(\lambda/\tl,\mu/\tm)},
    \end{equation}
    where the sum is over all $(\tl, \tm)$ that are bipartitions of $j-i$ and that $(\lambda/\tl,\mu/\tl)$ is well-defined. Denote
    \begin{equation}
        \eta_i^j =  (e+t_{1,-1})(e+t_{2,-2})\dots(e+t_{i,-i}) (e-t_{i+1,-i-1})\dots(e-t_{j-i,-j+i}),
    \end{equation}
    that is, $\epsilon_i^j = \eta_i^j \sum_{w \in B_{n-k+i}} w$, we have
    \begin{equation}
        \chi^{(\lambda,\mu)}(\epsilon^j_i) = \sum_{(\tl, \tm)} \chi^{(\tl, \tm)}(\eta_i^j) \chi^{(\lambda/\tl,\mu/\tm)} \left( \sum_{w \in B_{n-j+i}} w \right).
    \end{equation}

    Let $\theta$ be an irreducible character of $\ZZ/2\ZZ \cong B_1$, we have that $\theta (e + t_{l,-l}) \neq 0$ if and only if $\theta$ is the trivial character, and $\theta (e-t_{l,-l}) \neq 0$ if and only if $\theta $ is the signed character, i.e. $\theta(t_{l,-l}) = -1$. By the Generalized Murnaghan-Nakayama Rule~\eqref{eq:M-N}, we have that $\chi^{(\tl, \tm)}(\eta_i^j)$ is nonzero if and only if $|\tl|=i$ and $|\tm| = j-2i$. On the other hand, 
    \begin{equation}
        \chi^{(\lambda/\tl,\mu/\tm)} \left( \sum_{w \in B_{n-j+i}} w \right)
    \end{equation}
    is nonzero if and only if the trivial representation $\chi^{((n-j+i),\emptyset)}$ has non-zero multiplicity in $\chi^{(\lambda/\tl,\mu/\tm)}$. This forces $\mu/\tm = \emptyset$, and the diagram of $\lambda/\tl$ has no two boxes in the same row. Combining these conditions, we have that $\epsilon_i^j$ acts as a nonzero operator on $V^{(\lambda, \mu)}$ if and only if the following holds:
    \begin{itemize}
        \item $|\mu| = j-2i$, thus $|\lambda| = n-j+2i $;
        \item there exists $\tl \vdash i$ such that the diagram of $\lambda/\tl$ has no two boxes in the same column.
    \end{itemize}
    Note that the second condition is equivalent to
    \begin{itemize}
        \item the diagram of $\lambda$ has less than $i$ boxes beyond the first row, i.e. $\lambda_1 \geq |\lambda| - i = n-j+i$.
    \end{itemize}
    Since the first condition is an equality, the two conditions can be combined to
    \begin{equation}
        2 \lambda_1 + |\mu| \geq 2n-j.
    \end{equation}
    Since $j \leq k$ is arbitrary, we can conclude that elements of $J_k$ act as nonzero operators on all $V^{(\lambda,\mu)}$ satisfying
    \begin{equation}
        2\lambda_1 + |\mu| \geq 2n-k.
    \end{equation}
    This gives the isomorphism of $B_n \times B_n$ modules
    \begin{equation}
        J_k \cong \bigoplus_{\substack{(\lambda,\mu) \vdash_2 n \\ 2 \lambda_1 + |\mu| \geq 2n-k}} \End_{\CC}(V^{(\lambda,\mu)}),
    \end{equation}
    which completes the proof.
\end{proof}

\subsection{Graded Module Structure of $\CC[\xx]/I_{\symm_{n,r}}$}
We now investigate the graded module structure of $\CC[\xx]/I_{\symm_{n,r}}$ for general $r$. Given $\bl = (\lambda^0,\lambda^1,\dots,\lambda^{r-1})$ an $r$-partition of $n$, let $\bl'= (\lambda^0,\lambda^{r-1},\lambda^{r-2},\dots,\lambda^1)$ be the {\em dual $r$-partition of $\bl$}, obtained by fixing $\lambda^0$ and interchanging $\lambda^i$ with $\lambda^{r-i}$. As the dual module of a unitary matrix representation is its conjugate, by the generalized Murnaghan-Nakayama rule~\eqref{eq:M-N}, the dual module of $V^{\bl}$ is $V^{\bl'}$, and we have the following theorem.

\begin{theorem}\label{thm:colored}
    For any $k \geq 0$, the degree $k$ piece $(\CC[\xx]/I_{\symm_{n,r}})_{k}$ has graded $\symm_{n,r} \times \symm_{n,r}$ structure
    \begin{equation}
        (\CC[\xx]/I_{\symm_{n,r}})_{k} \cong \bigoplus_{\substack{  \bl  \vdash_r  n \\ r \lambda^0_1 + \sum_{i=1}^{r-1} i |\lambda^i| = rn-k  }} V^{\bl} \otimes V^{\bl'}
    \end{equation}
\end{theorem}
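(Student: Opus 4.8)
The plan is to run the argument of Theorem~\ref{thm:B_n}, the only structural change being forced by duality: since for $r>2$ the modules $V^{\bl}$ are not self-dual, the identity $\End_\CC(W)\cong W\otimes W^\ast$ specializes to $\End_\CC(V^{\bl})\cong V^{\bl}\otimes V^{\bl'}$ (using, as recorded just before the statement, that $(V^{\bl})^\ast=V^{\bl'}$) rather than to $V^{\bl}\otimes V^{\bl}$. So I would start from the Artin--Wedderburn decomposition of the semisimple algebra $\CC[\symm_{n,r}]$,
\[
\CC[\symm_{n,r}]\;\cong\;\bigoplus_{\bl\vdash_r n}\End_\CC(V^{\bl})\;\cong\;\bigoplus_{\bl\vdash_r n}V^{\bl}\otimes V^{\bl'},
\]
an isomorphism of ungraded $\symm_{n,r}\times\symm_{n,r}$-modules, where the two-sided translation action on $\CC[\symm_{n,r}]$ matches the $(A,A')\cdot\xx=A\xx A'^{-1}$ action under $\CC[\xx]/\II(\symm_{n,r})\cong\CC[\symm_{n,r}]$. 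It then suffices to show, for every $k$,
\[
(\CC[\xx]/I_{\symm_{n,r}})_{\le k}\;\cong\;\bigoplus_{\substack{\bl\vdash_r n\\ r\lambda^0_1+\sum_{i=1}^{r-1}i|\lambda^i|\ \ge\ rn-k}}\End_\CC(V^{\bl}),
\]
because passing to the successive quotients of the filtration $\{(\CC[\xx]/I_{\symm_{n,r}})_{\le k}\}_k$ turns the $\ge$ into $=$ and $\End_\CC(V^{\bl})$ into $V^{\bl}\otimes V^{\bl'}$.

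To analyze the left-hand side I would realize it as a two-sided ideal. By Theorem~\ref{thm:thm5} we have $I_{\symm_{n,r}}=\gr\II(\symm_{n,r})$, so Lemma~\ref{lem:lem2} together with Lemma~\ref{lem:spanning} identifies $(\CC[\xx]/I_{\symm_{n,r}})_{\le k}$ with the image $L_k$ of $\CC[\xx]_{\le k}$ in $\CC[\xx]/\II(\symm_{n,r})\cong\CC[\symm_{n,r}]$, and shows $L_k$ is spanned by the classes of the monomials $\prod_{l=1}^r m(\RR_l)^l$ with $\sum_l l\,|\RR_l|\le k$. As $L_k$ is $\symm_{n,r}\times\symm_{n,r}$-stable it is a two-sided ideal, and since each such monomial lies, up to a scalar, in the bimodule orbit of a diagonal representative $m_\alpha=x_{1,1}^{\alpha_1}\cdots x_{p,p}^{\alpha_p}$ with $\alpha=(\alpha_1\ge\cdots\ge\alpha_p)$ and $1\le\alpha_s\le r$, this ideal $J_k$ is generated by the classes of the $m_\alpha$ with $|\alpha|\le k$. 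The key identification is that of $m_\alpha$ inside the group algebra: writing $z_s\in\symm_{n,r}$ for the generator of the $s$-th copy of $\ZZ/r\ZZ$ in the base subgroup, $\zeta$ for a fixed primitive $r$-th root of unity, and letting $\symm_{n-p,r}$ act on the last $n-p$ positions, the element $m_\alpha$, viewed in $\CC[\xx]/\II(\symm_{n,r})\cong\CC[\symm_{n,r}]$, equals
\[
\epsilon_\alpha=\prod_{s=1}^{p}\Bigl(\sum_{c\in\ZZ/r\ZZ}\zeta^{\alpha_s c}\,z_s^{c}\Bigr)\cdot\Bigl(\sum_{v\in\symm_{n-p,r}}v\Bigr),
\]
which specializes to the elements $\epsilon^j_i$ of the proof of Theorem~\ref{thm:B_n}, the factor $\sum_c\zeta^{\alpha_s c}z_s^{c}$ playing the role there of $e\pm t_{l,-l}$ and degenerating to $\sum_c z_s^{c}$ precisely when $\alpha_s=r$.

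It remains to determine the set of $\bl$ for which some $\epsilon_\alpha$ with $|\alpha|\le k$ acts as a nonzero operator on $V^{\bl}$; since the image of $J_k$ under the Artin--Wedderburn map is a two-sided ideal of $\bigoplus_{\bl}\End_\CC(V^{\bl})$ it equals $\bigoplus$ over exactly this set. Regarding $\epsilon_\alpha\in\CC[\symm_{p,r}\times\symm_{n-p,r}]$ and restricting $V^{\bl}$ along this subgroup via the Branching Rule~\eqref{eq:branching} and $\chi^{\bl/\bm}=\sum_\bn c^{\bl}_{\bm,\bn}\chi^{\bn}$: the factor $\sum_v v$ kills every non-trivial $\symm_{n-p,r}$-constituent, forcing $\mu^i=\lambda^i$ for $i\ge1$ and $\lambda^0/\mu^0$ to be a horizontal strip of size $n-p$; while $\prod_s(\sum_c\zeta^{\alpha_s c}z_s^{c})$ is, up to a nonzero scalar, the projector of the base-group algebra $\CC[(\ZZ/r\ZZ)^p]$ onto the linear character $(c_1,\dots,c_p)\mapsto\zeta^{-(\alpha_1 c_1+\cdots+\alpha_p c_p)}$, so --- evaluating $\chi^{\bm}$ on the base-group classes by the Generalized Murnaghan--Nakayama Rule~\eqref{eq:M-N} --- it acts nonzero on $V^{\bm}$ exactly when $|\mu^0|=\#\{s:\alpha_s=r\}$ and $|\mu^i|=\#\{s:\alpha_s=r-i\}$ for $1\le i\le r-1$. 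Eliminating $\bm$ between these two sets of constraints and minimizing $|\alpha|$ over all admissible choices --- the minimum being $|\alpha|=rn-r\lambda^0_1-\sum_{i=1}^{r-1}i|\lambda^i|$, attained when the strip removed from $\lambda^0$ has the maximal size $\lambda^0_1$ --- one finds that some such $\epsilon_\alpha$ acts nonzero on $V^{\bl}$ if and only if $r\lambda^0_1+\sum_{i=1}^{r-1}i|\lambda^i|\ge rn-k$. This gives $J_k\cong\bigoplus_{\bl}\End_\CC(V^{\bl})$ over the stated index set, and hence the theorem.

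The main obstacle is the representation-theoretic computation of the last paragraph, as it already was for $r=2$. Two points need care beyond the $B_n$ argument: identifying $m_\alpha$ with $\epsilon_\alpha$ correctly inside the wreath-product group algebra while tracking the coefficients $\zeta^{\alpha_s c}$; and controlling the restriction of $V^{\bm}$ to the base group $(\ZZ/r\ZZ)^p$, where now all $r-1$ non-trivial characters of $\ZZ/r\ZZ$ --- not merely a sign --- can occur, which is exactly where the Generalized Murnaghan--Nakayama Rule enters. Collapsing the resulting family of equalities together with the horizontal-strip condition into the single inequality $r\lambda^0_1+\sum_{i=1}^{r-1}i|\lambda^i|\ge rn-k$ is the technical crux.
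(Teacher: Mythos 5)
Your proposal is correct and follows essentially the same route as the paper's proof: Artin--Wedderburn plus $\End_\CC(V^{\bl})\cong V^{\bl}\otimes V^{\bl'}$ on one side; Lemma~\ref{lem:lem2} to replace $(\CC[\xx]/I_{\symm_{n,r}})_{\le k}$ by the filtration piece $L_k$ of $\CC[\symm_{n,r}]$; the diagonal monomials as two-sided generators, identified with idempotent-like elements built from the base group and $\sum_{w\in\symm_{n-p,r}}w$; and the Branching Rule together with the Generalized Murnaghan--Nakayama Rule to pin down exactly which $V^{\bl}$ survive. The only cosmetic difference is that you index the diagonal representatives by a weakly decreasing exponent tuple $\alpha$ with $1\le\alpha_s\le r$ rather than by the rook placements $\RR_1,\dots,\RR_r$, and you make the ``minimize $|\alpha|$'' step explicit where the paper folds it directly into the inequality; your computed minimum $rn-r\lambda^0_1-\sum_{i=1}^{r-1}i|\lambda^i|$ agrees with the paper's bound.
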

\begin{proof}
    Similar to the proof of Theorem~\ref{thm:B_n}, we have that for any $\symm_{n,r} \times \symm_{n,r}$ module $W$,
    \begin{equation}
        \End_{\CC}(W) \cong W \otimes W^*.
    \end{equation}
    Then, similar to~\eqref{eq:pf_chain}, we have the isomorphisms of $\symm_{n,r} \times \symm_{n,r}$ modules
    \begin{equation}
        \CC[\symm_{n,r}] \cong \bigoplus_{\bl \vdash_r n} \End_{\CC}(V^{\bl}) \cong \bigoplus_{\bl \vdash_r n} V^{\bl} \otimes ( V^{\bl} ) ^* \cong \bigoplus_{\bl \vdash_r n}V^{\bl}  \otimes V^{\bl'}.
    \end{equation}    
    By Theorem~\ref{thm:thm5}, we have a chain of isomorphisms and an equality of ungraded $\symm_{n,r} \times \symm_{n,r}$ modules:
    \begin{equation}
        \CC[\symm_{n,r}] \cong \CC[\xx]/\II(\symm_{n,r}) \cong \CC[\xx]/ \gr \II(\symm_{n,r}) = \CC[\xx]/I_{\symm_{n,r}}.
    \end{equation}
    Again, let $L_k$ be the image of $\CC[\xx]_{\leq k}$ to $\CC[\xx]/\II ( \symm_{n,r})$, that is,
    \begin{equation}
        L_k = \mathrm{Image}(\CC[\xx]_{\leq k} \hookrightarrow \CC[\xx] \twoheadrightarrow \CC[\xx]/\II(\symm_{n,r})  ).
    \end{equation}
    By Lemma~\ref{lem:lem2}, we have that 
    \begin{equation}~\label{eq:L_k}
         L_k = \mathrm{span}_{\CC} \left\{ \left( \prod_{i=1}^{r} m(\RR_i)^i \right) + \II(\symm_{n,r}) : \bigcup_{i=1}^r \RR_i \text{ is a rook placement and }  \sum_{i=1}^{r} i \cdot |\RR_i| \leq k   \right\}.
    \end{equation}    
    Applying Lemma~\ref{lem:lem2} again, we have the isomorphism of ungraded $\symm_{n,r} \times \symm_{n,r}$ modules
    \begin{equation}
        L_k \cong (\CC[\xx]/I_{\symm_{n,r}})_{\leq k},
    \end{equation}    
    qnd we are reduced to showing that
    \begin{equation}
        L_k \cong \bigoplus_{\substack{\bl \vdash_r n \\ r \lambda^0_1 + \sum_{i=1}^{r-1} i |\lambda^i| \geq rn-k }} \End_{\CC}(V^{\bl}).
    \end{equation}
    
    Let $M_k$ be the subset of the spanning set in ~\eqref{eq:L_k}, containing all monomials
    \begin{equation}
        \prod_{i=1}^{r} m(\RR_i)^i 
    \end{equation}
    where the rook placements $\{\RR_i \}_{i=1}^r$ satisfy the criteria in ~\eqref{eq:L_k}, and for all $j \leq r$,
    \begin{equation}~\label{eq:rk-criteria}
        \bigcup_{i=1}^j \RR_i = \{(a,a) : 1 \leq a \leq \sum_{i=1}^j |\RR_i| \}.  
    \end{equation}
    In other words, in the $n \times n $ grid, the rook placements $\{\RR_i\}$ are arranged on the diagonals, from bottom left to top right. For example, when $k=5$ and $r=3$, we have
    \begin{equation}
    M_k = \{x_{1,1}x_{2,2}x_{3,3}x_{4,4}x_{5,5}, x_{1,1}x_{2,2}x_{3,3}x_{4,4}^2, x_{1,1}x_{2,2}x_{3,3}^3, x_{1,1}x_{2,2}^2x_{3,3}^2 , x_{1,1}^2 x_{2,2}^3 \}.
    \end{equation}
    Monomials in $L_k$ are generated by monomials in $M_k$ by two sided action of $\symm_{n,r} \times \symm_{n,r}$. 

    Again, identifying $\CC[\symm_{n,r}]$ and $\CC[\xx]/\II(\symm_{n,r})$ with functions from $\symm_{n,r}$ to $\CC$, we can identify $\x_{i,i}^j$ in $\CC[\xx]/\II(\symm_{n,r})$ with the group algebra element
    \begin{equation}
        (e+ \omega^j (i^0,i^1) + \omega ^{2j} (i^0,i^2)+ \cdots +\omega^{(r-1)j} (i^0, i^{r-1})) \sum_{w \in \symm_{n-1,r}} w   ,  
    \end{equation}
    where $\symm_{n-1,r}$ is embedded in $\symm_{n,r}$ by acting on the last $n-1$ letters, $e$ denotes the identity in $\symm_{n,r}$, $\omega$ is the primitive $r$-th root of unity in $\CC$, and $ (i^0, i^l)$ denotes the element in $\symm_{n,r}$ that maps $i^0$ to $i^l$ while fixing all other elements. Let $\Sigma_k$ be the collection of group algebra elements identified with monomials in $M_k$, and let $J_k$ be the two-sided ideal generated by $J_k$. We have identification
    \begin{equation}
        L_k = J_k.
    \end{equation}

    So we are reduced to finding $r$-partitions $\bl$ such that not all elements of $J_k$ acts as zero operator. Denote
    \begin{equation}
        \epsilon_i^j = e+ \omega^j (i^0,i^1) + \omega ^{2j} (i^0,i^2)+ \cdots +\omega^{(r-1)j} (i^0, i^{r-1}).
    \end{equation}
    For rook placements $\{\RR_j \}_{j=1}^r$ satisfying the criteria in ~\eqref{eq:L_k} and~\eqref{eq:rk-criteria}, let $\RR=\bigcup_{j=1}^r \RR_j$, we have that the group algebra element that is identified with $\prod_{j=1}^{r} m(\RR_j)^i$ is
    \begin{equation}
        \epsilon = \left( \prod_{j=1}^r \prod_{(i,i) \in \RR_j} \epsilon_i^j \right) \sum_{w \in \symm_{n-|\RR|,r}}w.
    \end{equation}

    Let $\eta = \prod_{j=1}^r \prod_{(i,i) \in \RR_j} \epsilon_i^j $, we can embed $\symm_{n-|\RR|,r}$ in $\symm_{n,r}$ by letting it act on the last $n-|\RR|$ letters. Then, by the Branching Rule~\eqref{eq:branching}, we have
    \begin{equation}
        \chi^{\bl}(\epsilon) = \sum_{\bm \vdash_r |\RR|} \chi^{\bm}(\eta) \chi^{\bl/\bm} \left( \sum_{w \in \symm_{n-|\RR|,r}}w \right).
    \end{equation}

    Let $\theta$ be any irreducible $\CC$-character of $\symm_{1,r} \cong \ZZ/r \ZZ$. Note that we have $\theta (\epsilon^j_i) \neq 0$ if and only if $\theta((i^0,i^1)) = \omega^{-j}$. Thus, by the generalized Murnaghan-Nakayama rule~\eqref{eq:M-N}, we have that for $\bm = (\mu^0,\mu^1,\dots\mu^{r-1}) \vdash_r |\RR|$, $\chi^{\bm}(\eta)$ is nonzero if and only if $|\mu^j| = |\RR_{r-j}|$. On the other hand, 
    \begin{equation}
        \chi^{\bl/\bm} \left( \sum_{w \in \symm_{n-|\RR|,r}}w \right)
    \end{equation}
    is nonzero if and only if the trivial character has nonzero multiplicity in $\chi^{\bl/\bm}$, which indicates that $|\mu^j| = |\lambda^j|$ for $1 \leq j \leq r-1$, and there are no two boxes in the same column in the diagram of $\lambda^0/\mu^0$. That is, there are less than $|\mu^0|=|\RR_r|$ boxes beyond the first row in the diagram of $\lambda^0$.

    Let $d=\sum_{j=1}^r j \cdot |\RR_j|$ be the degree of the monomial $\prod_{j=1}^{r} m(\RR_j)^i$. Combining these conditions, we have that $\epsilon$ acts as a nonzero operator on $V^{\bl}$ if there exists $\bm = (\mu^0,\mu^1,\dots,\mu^{r-1})$ such that the following holds:
    \begin{enumerate}
        \item $r|\mu^0|+(r-1) |\mu^1| + \dots + |\mu^{r-1}| = d$;
        \item $\lambda^j = \mu^j$ for $j \neq 0$;
        \item diagram of $\lambda^0/\mu^0$ has no two boxes in the same column.
    \end{enumerate}
    Note that the third condition is equivalent to 
    \begin{equation}
        \lambda^0_1 \geq |\lambda^0| - |\mu^0| = n-|\lambda^1|-|\lambda^2|- \dots -|\lambda^{r-1}|-|\mu^0|.
    \end{equation}
    Multiplying both sides by $r$, we get
    \begin{equation}\label{eq:final}
        r \lambda^0_1 \geq rn-r |\lambda^1| - r|\lambda^2|- \dots -r|\lambda^{r-1}| -r|\mu^0|.
    \end{equation}     
    Then, the first and second condition combined gives that
    \begin{equation}
        r|\mu^0| = d- (r-1)|\lambda^1|-(r-2)|\lambda^2|- \dots -|\lambda^{r-1}|.
    \end{equation}
    Substituting this into the inequality~\eqref{eq:final}, we have 
    \begin{equation}
        r \lambda^0_1 + |\lambda^1| + 2|\lambda^2| + \dots + (r-1)|\lambda^{r-1}| \geq rn-d. 
    \end{equation}
    This shows the isomorphism of $\symm_{n,r} \times \symm_{n,r}$ modules
    \begin{equation}
        J_k \cong \bigoplus_{\substack{\bl \vdash_r n \\ r \lambda^0_1 + \sum_{i=1}^{r-1} i |\lambda^i| \geq rn-k }} \End_{\CC}(V^{\bl}),
    \end{equation}
    which completes the proof.

\end{proof}

\section{Conclusions and Conjectures}\label{Conjecture}
Given a (possibly infinite) sequence ${a_i}$ of positive real numbers, we say that the sequence is {\em log-concave} if for all positive integer $i$, we have
\begin{equation}
    a_i \cdot a_{i+2} \leq a_{i+1}^2;
\end{equation}
and we say the sequence is {\em uni-modal} if there exists a positive integer $k$ such that
\begin{equation}
    a_i \leq a_{i+1} \text{ for } i<k; \quad a_i \geq a_{i+1} \text{ for } i \geq k.
\end{equation}
    In the case of Hilbert series, we say a Hilbert series is log-concave or uni-modal if its sequence of coefficients is log-concave or uni-modal. Recall that the Hilbert series of $\CC[\xx]/I_n$ is
\begin{equation}
    \Hilb( \CC[\xx] / I_n; q) = a_{n,n} + a_{n,n-1} \cdot q + a_{n,n-2} \cdot q^2  + \dots + a_{n,1} \cdot q^{n-1},
\end{equation}
where $a_{n,k}$ counts the number of permutations $w \in \symm_n$ with $\lis(w) = k$. Chen conjectured~\cite{chen} that the sequence $\{a_{n,k}\}$ is log-concave, that is, fixing an integer $n$, for all $k$ such that $2 \leq k \leq n-1$, 
\begin{equation}
    a_{n,k}^2 \geq a_{n,k-1} \cdot a_{n,k+1}.
\end{equation}

We found some interesting patterns when we tried to generalize this to the Hilbert series of $\CC[\xx]/I_{B_n}$. Recall that we showed in Corollary~\ref{cor:hilbert} that the Hilbert series of $\CC[\xx]/I_{B_n}$ is
\begin{equation}
    \Hilb( \CC[\xx]/ I_{B_n}; q) = b_{n,2n} + b_{n,2n-1} \cdot q + \cdots + b_{n,1} \cdot q^{2n-1},
\end{equation}
where $\{b_{n,k}\}_{k=1}^{2n-2}$ is defined in Definition~\ref{defn:b_n,k}. By testing 
$n$ up to $40$, we found that 
\begin{itemize}
    \item for $n \leq 8$, the sequence $\{b_{n,k}\}$ is log-concave;
    \item for $9 \leq n \leq 17$, we have $b_{n,k}^2 < b_{n,k-1} \cdot b_{n+1}$ when $k=3$, and log-concavity holds for the remaining of the sequence;
    \item for $18 \leq n \leq 27$, we have $b_{n,k}^2 < b_{n,k-1} \cdot b_{n+1}$ when $k=3 \text{ and } 5$, and log-concavity holds for the remaining of the sequence;
     \item for $27 \leq n \leq 40$, we have $b_{n,k}^2 < b_{n,k-1} \cdot b_{n+1}$ when $k=3,5, \text{ and } 7$, and log-concavity holds for the remaining of the sequence.
\end{itemize}
From this observation, we conjecture that the Hilbert series of $\CC[\xx]/I_{B_n}$ is ``almost log-concave": log-concavity breaks for some small $k$'s, but holds for the majority of the sequence.

In~\cite{baik}, Baik, Deift, and Johansson proved that as $n \rightarrow \infty$, the distribution function for $\lis$ on $\symm_n$, with proper rescaling and recentering, converges to the Tracy-Widom distribution of the largest eigenvalue of a random GUE matrix. That is, as $n \rightarrow \infty$, the histogram of $\{a_{n,k}\}$ for a fixed $n$ will converge in distribution (with rescaling and recentering) to the Tracy-Widom distribution. In Figure~\ref{fig:1}, we give the histogram of $\{a_{n,k}\}$ when $n=65$. The horizontal axis corresponds to values of $k$, and the height of each bar represents the corresponding $a_{n,k}$.
\begin{figure}
    \centering
    \includegraphics[width=10cm]{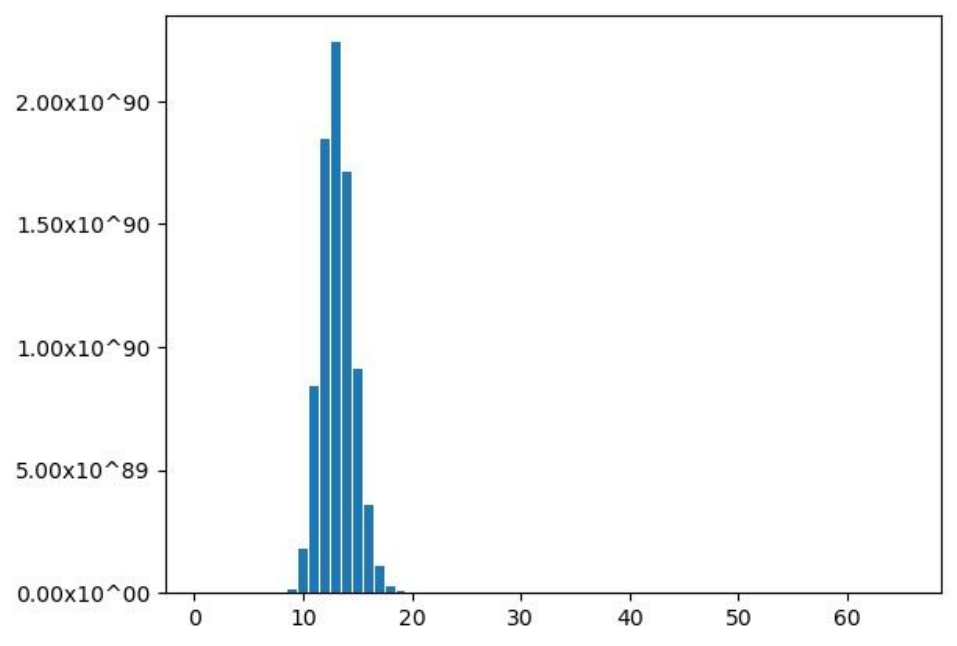}
    \caption{$a_{n,k}$ when $n=65$}
    \label{fig:1}
\end{figure}

One possible direction for future studies is to study the limit of the probability distributions corresponding to $\{b_{n,k}\}$ as $n \rightarrow \infty$. In analogy with the case of $\{a_{n,k}\}$, one could hope that we have convergence to a natural probability distribution. In Figure~\ref{fig:2}, we give the histogram of $\{b_{n,k}\}$ when $n=40$. The horizontal axis corresponds to values of $k$, and the height of each bar represents the corresponding $b_{n,k}$.

\begin{figure}
    \centering
    \includegraphics[width=10cm]{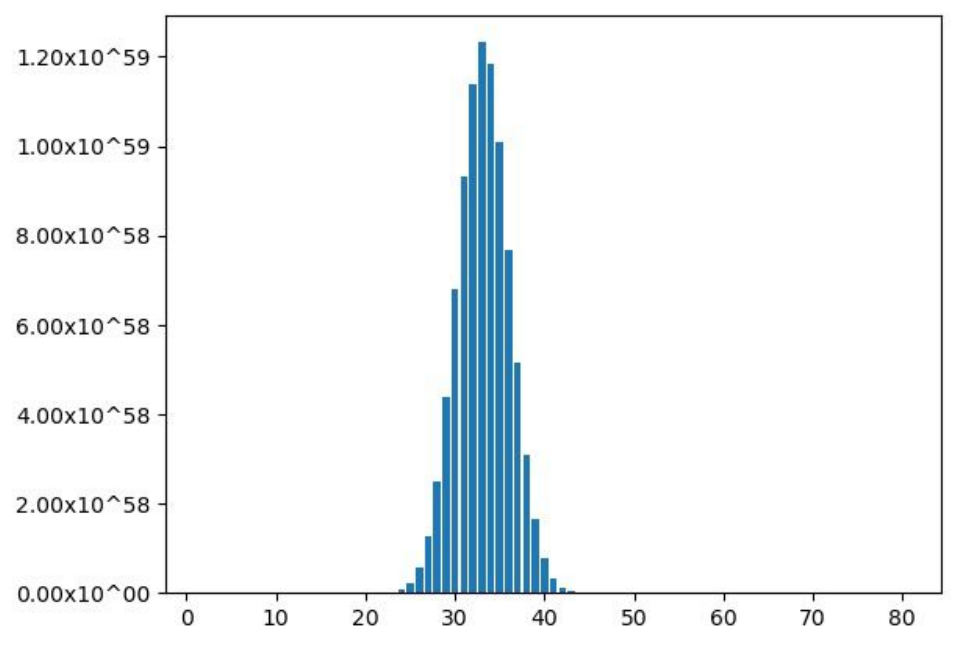}
    \caption{$b_{n,k}$ when $n=40$}
    \label{fig:2}
\end{figure}
Our testing on uni-modality also yields a very nice pattern, and we give the following conjecture:
\begin{conjecture}\label{conj}
    For any integers $n,r$, the Hilbert series of $\CC[\xx]/I_{\symm_{n,r}}$ is uni-modal, that is, there exists $k$ such that
    \begin{equation}
        c_{n,r,d} \leq c_{n,r,d+1} \text{ for } d < k \text{, and } c_{n,r,d} \geq c_{n,r,d+1} \text{ for } d \geq k.
    \end{equation}   
\end{conjecture}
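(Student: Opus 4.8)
\emph{A convolution reduction.}
By Theorem~\ref{thm:colored} the coefficient of $q^{d}$ in $\Hilb(\CC[\xx]/I_{\symm_{n,r}};q)$ is $\sum_{\bl}(f^{\bl})^{2}$, summed over $r$-partitions $\bl\vdash_r n$ with $r(|\lambda^{0}|-\lambda^{0}_{1})+\sum_{i=1}^{r-1}(r-i)|\lambda^{i}|=d$, where $f^{\bl}$ is the number of standard Young tableaux of shape $\bl$; this coefficient is $c_{n,r,rn-d}$, so Conjecture~\ref{conj} is equivalent to the Hilbert function of $\CC[\xx]/I_{\symm_{n,r}}$ being unimodal. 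Using $f^{\bl}=\binom{n}{|\lambda^{0}|,\dots,|\lambda^{r-1}|}\prod_{i}f^{\lambda^{i}}$ and $\sum_{\lambda\vdash m}(f^{\lambda})^{2}=m!$, and grouping this sum by the tuple of sizes $(m_{0},\dots,m_{r-1})=(|\lambda^{0}|,\dots,|\lambda^{r-1}|)$, one gets
\begin{equation*}
    \Hilb\!\left(\CC[\xx]/I_{\symm_{n,r}};q\right)=\sum_{m_{0}+\cdots+m_{r-1}=n}\binom{n}{m_{0},\dots,m_{r-1}}^{2}\Big(\prod_{i=1}^{r-1}m_{i}!\Big)\;q^{\sum_{i=1}^{r-1}(r-i)m_{i}}\;\Hilb\!\left(\CC[\xx]/I_{m_{0}};q^{r}\right),
\end{equation*}
where $I_{m_{0}}$ is the $m_{0}\times m_{0}$ analogue of Definition~\ref{defn:defn1}, read as the constant $1$ when $m_{0}=0$. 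Thus the colored Hilbert series is an explicit nonnegative combination of shifts of dilations of the symmetric-group Hilbert series $\Hilb(\CC[\xx]/I_{m};q)=\sum_{k}a_{m,k}q^{m-k}$.

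\emph{Step 1: the symmetric-group case.}
First settle $r=1$, which is exactly the assertion that $(a_{m,k})_{k}$ is unimodal. This follows from Chen's still-open log-concavity conjecture; failing a citation, one would prove unimodality directly, e.g.\ from the RSK identity $a_{m,k}=\sum_{\lambda\vdash m,\,\lambda_{1}=k}(f^{\lambda})^{2}$ by an injection of tableau pairs moving a cell into (resp.\ out of) the first row of the shape, or by establishing the weak Lefschetz property for Rhoades's algebra $\CC[\xx]/I_{m}$.

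\emph{Step 2: lifting through the convolution.}
Granting Step 1, one must show the displayed combination is unimodal. As a sum of unimodal polynomials need not be unimodal, this requires extra input: for fixed $m_{0}$ the polynomials $q^{\sum_{i\ge1}(r-i)m_{i}}\,\Hilb(\CC[\xx]/I_{m_{0}};q^{r})$ over the allowed types might be nested around a common region, or the weights $\binom{n}{m_{0},\dots,m_{r-1}}^{2}\prod_{i\ge1}m_{i}!$ might concentrate on a single dominant type. A cleaner route would be a direct weight-preserving injection on $\symm_{n,r}$: the color-raising move carrying one element of $C_{i}(w)$ into $C_{i+1}(w)$ changes the statistic in Definition~\ref{defn:c_n,r,k} by $-1$ for $1\le i\le r-2$, while a Viennot-shadow move transferring a cell between $C_{1}(w)$ and $C_{0}(w)$ changes it by $\pm1$ under suitable conditions; assembling these into injections $\{w:\mathrm{stat}(w)=d\}\hookrightarrow\{w:\mathrm{stat}(w)=d\pm1\}$ on either side of the peak would finish the proof.

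\emph{Main obstacle.}
The difficulty is twofold. At $r=1$ the conjecture is at least as hard as proving unimodality of the longest increasing subsequence distribution on $\symm_{n}$, for which we know of no argument weaker than Chen's conjecture. And even with every symmetric-group input in hand, Step 2 needs a genuinely new idea, since the convolution superposes polynomials supported on different residues modulo $r$ whose peaks drift with the tuple of sizes, so no soft ``sum of unimodal'' or ``product of unimodal'' principle applies. The most promising single target is therefore to show the standard graded Artinian $\CC$-algebra $\CC[\xx]/I_{\symm_{n,r}}$ has the weak Lefschetz property --- multiplication by a generic linear form having maximal rank in every degree --- which would force its Hilbert function to be unimodal; producing such a Lefschetz element (e.g.\ a generic $\symm_{n,r}\times\symm_{n,r}$-translate of $\sum_{i}x_{i,i}$) is where we are stuck.
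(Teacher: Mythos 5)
This statement is Conjecture~\ref{conj}; the paper does not prove it, and explicitly presents only the numerical check ``for $r\le 4$ and $n\le 20$.'' So there is no paper proof to compare against, and --- as you acknowledge yourself --- your submission is not a proof either. What you \emph{do} establish is a correct and genuinely useful reduction that the paper does not record: combining Theorem~\ref{thm:colored} with the factorization $f^{\bl}=\binom{n}{m_0,\dots,m_{r-1}}\prod_i f^{\lambda^i}$ (where $m_i=|\lambda^i|$) and the RSK identity $\sum_{\lambda\vdash m}(f^\lambda)^2=m!$, you arrive at the convolution
\begin{equation*}
\Hilb\!\left(\CC[\xx]/I_{\symm_{n,r}};q\right)=\sum_{m_0+\cdots+m_{r-1}=n}\binom{n}{m_0,\dots,m_{r-1}}^{2}\Big(\prod_{i=1}^{r-1}m_i!\Big)\;q^{\sum_{i\ge1}(r-i)m_i}\;\Hilb\!\left(\CC[\xx]/I_{m_0};q^{r}\right),
\end{equation*}
and I verified your exponent bookkeeping: since $rn-r\lambda^0_1-\sum_{i\ge1} i|\lambda^i|=r(|\lambda^0|-\lambda^0_1)+\sum_{i\ge1}(r-i)|\lambda^i|$, the degree selector matches the paper's $c_{n,r,rn-d}$. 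This correctly shows the $r=1$ case is exactly unimodality of the $\lis$-distribution $(a_{n,k})_k$, which is open (it is implied by, and weaker than, Chen's conjecture).

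The gap is therefore exactly where you place it, and it is not cosmetic. Step 1 rests on an unproved statement, and Step 2 is a real obstruction: the shifted, $r$-dilated Hilbert polynomials in the convolution live on different residue classes mod $r$ with tuple-dependent peaks, so no off-the-shelf ``positive combination of unimodal sequences is unimodal'' lemma is available. The ``color-raising'' and ``shadow-transfer'' injections are sketched only at the level of a plan --- no map is constructed, injectivity is not argued, and it is not checked that a shadow-transfer move changes the statistic by exactly $\pm 1$ (moving a cell between $C_1(w)$ and $C_0(w)$ changes $|C_1|$ by $1$ but can change $\lis(C_0(w))$ by $0$ or $1$, so the net change in the statistic $r\cdot\lis(C_0)+\sum_{i\ge1}(r-i)|C_i|$ need not be $\pm 1$). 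The weak Lefschetz target is the right thing to aim at (and would bypass Steps 1--2 entirely), but no candidate element is shown to have maximal rank. In short: what you write is a correct reformulation plus a candid list of obstacles, which is a reasonable response to an open conjecture, but it should not be read as a proof.
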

Conjecture~\ref{conj} is tested to be true for $r \leq 4$ and $n \leq 20$.

As mentioned in~\cite{rhoades2023increasing}, orbit harmonics played a key tool in our analysis. We treated the colored permutation group $\symm_{n,r}$ as a collection of $n \times n$ matrices, and proved that $I_{\symm_{n,r}} = \gr \II(\symm_{n,r})$. We provide one possible direction for future studies below.

A {\em complex reflection} on a finite-dimensional vector space $V$ is an element of finite order in $\gl(V)$ which fixes a hyperplane pointwise. A {\em complex reflection group} $W$ is a finite group generated by complex reflections. Shepherd and Todd showed in~\cite{shephard_todd_1954} that any complex reflection group is either one of the $34$ exceptional cases, or in the form $G(r,p,n)$, where $r,p,n$ are integers with $p$ divides r, and the group $G(r,p,n)$ consists of $n \times n$ matrices with 
\begin{itemize}
    \item exactly one nonzero entry in each row or column;
    \item each nonzero entry is an $r$-th root of unity;
    \item the product of all the nonzero-entries is an $r/p$-th root of unity.
\end{itemize}
When $p=1$, $G(r,1,n)$ is the colored permutation group $\symm_{n,r}$, and we calculated $\gr \II(G(r,1,n)) = I_{\symm_{n,r}}$ in this paper. It may be interesting to study $\gr \II(G(r,p,n))$ for different $p$'s that divides $r$. For example, $G(2,2,n)$ coincides with the Coxeter group of type $D$, and it would be a good object for future study.

Another possible direction is on the equivariant log-concavity of $\CC[\xx]/I_{\symm_{n,r}}$. In~\cite{rhoades2023increasing}, Rhoades conjectured that with $\symm_n \times \symm_n$ acting on $(\CC[\xx])_d$ by independent row and column operation, there exists an injection
\begin{equation}
    (\CC[\xx]/I_n)_{d+1} \otimes (\CC[\xx]/I_n)_{d-1} \xhookrightarrow{\phi} (\CC[\xx]/I_n)_d \otimes (\CC[\xx]/I_n)_d
\end{equation}
such that given $(w,v) \in \symm_n \times \symm_n$, $f \in (\CC[\xx]/I_n)_{d+1}$, $g \in (\CC[\xx]/I_n)_{d-1}$,
\begin{equation}
    \phi \left( (w,v) \cdot (f \otimes g) \right) = (w,v) \cdot \left(\phi (f \otimes g)\right).
\end{equation}
This conjecture would imply both Chen's Conjecture~\cite{chen} and the Novak-Rhoades Conjecture~\cite{novak2020increasing}. As the Hilbert series of $\CC[\xx]/I_{\symm_{n,r}}$ is not always log-concave, it would be interesting to study given $\symm_{n,r}$, for which $d$'s we can have an injection
\begin{equation}
    (\CC[\xx]/I_{\symm_{n,r}})_{d+1} \otimes (\CC[\xx]/I_{\symm_{n,r}})_{d-1} \xhookrightarrow{} (\CC[\xx]/I_{\symm_{n,r}})_d \otimes (\CC[\xx]/I_{\symm_{n,r}})_d
\end{equation}
that commutes with the diagonal action of $\symm_{n,r} \times \symm_{n,r}$. Our calculation shows that when $n=1$ such an injection exists for any $r,d$.

\section{Acknowledgments}
The author is very grateful to Brendon Rhoades for proof-reading this paper and providing a lot of useful suggestions.
\printbibliography
\end{document}